\numberwithin{equation}{theorem}
\renewcommand{\m}{\mathfrak{m}}
\renewcommand{\n}{\mathfrak{n}}
\DeclareMathOperator{\depth}{depth}
\DeclareMathOperator{\Coh}{Coh}
\DeclareMathOperator{\QCoh}{QCoh}
\DeclareMathOperator{\sheafhom}{\mathscr{H}{\kern -2pt om}}
\theoremstyle{theorem}
\newtheorem{proposition/definition}[theorem]{Proposition/Definition}
\begin{document}
\title{The vanishing conjecture for maps of Tor and derived splinters}
\author{Linquan Ma}
\address{Department of Mathematics\\ University of Utah\\ Salt Lake City\\ Utah 84112}
\email{lquanma@math.utah.edu}
\maketitle
\begin{abstract}
We say an excellent local domain $(S,\n)$  {\it satisfies the vanishing conditions for maps of Tor}, if for every $A\to R\to S$ with $A$ regular and $A\to R$  module-finite torsion-free extension, and every $A$-module $M$, the map $\Tor^A_i(M, R)\to \Tor_i^A(M, S)$ vanishes for every $i\geq 1$. Hochster-Huneke's conjecture (theorem in equal characteristic) thus states that regular rings satisfy such vanishing conditions \cite{HochsterHunekeApplicationsofbigCMalgebras}. The main theorem of this paper shows that, in equal characteristic, rings that satisfy the vanishing conditions for maps of Tor are exactly {\it derived splinters} in the sense of Bhatt \cite{BhattDerivedsplintersinpositivecharacteristic}. In particular, rational singularities in characteristic $0$ satisfy the vanishing conditions. This greatly generalizes Hochster-Huneke's result \cite{HochsterHunekeApplicationsofbigCMalgebras} and Boutot's theorem \cite{BoutotRationalsingularitiesandquotientsbyreductivegroups}. Moreover, our result leads to a new (and surprising) characterization of rational singularities in terms of splittings in module-finite extensions.
\end{abstract}

\section{Introduction}

In \cite{HochsterHunekeApplicationsofbigCMalgebras}, Hochster and Huneke proved the following extremely strong vanishing result in equal characteristic:

\begin{theorem}[{\it cf.} Theorem 4.1 in \cite{HochsterHunekeApplicationsofbigCMalgebras}]
\label{theorem--vanishing theorem for maps of Tor}
Let $A$ be an equal characteristic regular domain, let $R$ be a module-finite and torsion-free extension of $A$, and let $R\to S$ be any homomorphism from $R$ to a regular ring $S$. Then for every $A$-module $M$ and every $i\geq 1$, the map $\Tor_i^A(M, R)\to\Tor^A_i(M, S)$ vanishes.
\end{theorem}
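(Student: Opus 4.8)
The plan is to treat the positive characteristic case by absolute integral closures and big Cohen--Macaulay algebras, and to obtain equal characteristic $0$ by reduction to characteristic $p$. First I would make the standard reductions: since $\Tor$ commutes with filtered colimits we may take $M$ finitely generated, so that $\Tor_i^A(M,R)$ and $\Tor_i^A(M,S)$ are finitely generated over $R$ and $S$ respectively; since a map of finitely generated $S$-modules vanishes iff it vanishes after localizing $S$ at each maximal ideal $\mathfrak{q}$ and completing, and since $\Tor$ commutes with localization and with the flat base changes $A_{\mathfrak{q}\cap A}\to\widehat{A_{\mathfrak{q}\cap A}}$ and $S_{\mathfrak{q}}\to\widehat{S_{\mathfrak{q}}}$, we may assume that $A$ and $S$ are complete regular local domains, that $R$ is a module-finite torsion-free extension of $A$ (possibly neither local nor reduced), that $M$ is finitely generated over $A$, and that $R\to S$ is an arbitrary ring homomorphism---note in particular that $A\to S$ need not be injective.

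Now suppose $\Char A=p>0$, and let $A^{+}$ denote the absolute integral closure of $A$. By Hochster--Huneke, $A^{+}$ is a balanced big Cohen--Macaulay $A$-algebra; since $A$ is regular local, a regular system of parameters of $A$ is then a regular sequence on $A^{+}$, so the Koszul complex on it remains acyclic in positive degrees after tensoring with $A^{+}$, whence $A^{+}$ is faithfully flat over $A$ and $\Tor_i^{A}(M,A^{+})=0$ for all $i\geq 1$. Also, $S$ is a splinter, and $S^{+}$ is the filtered union of the module-finite extension domains $S_{\alpha}$ of $S$ inside $\overline{\Frac S}$, each of which contains $S$ as a direct summand; hence each $\Tor_i^{A}(M,S)\to\Tor_i^{A}(M,S_{\alpha})$, and therefore $\Tor_i^{A}(M,S)\to\Tor_i^{A}(M,S^{+})$, is injective. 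It therefore suffices to factor the ring map $R\to S^{+}$ through $A^{+}$: then the induced map $\Tor_i^{A}(M,R)\to\Tor_i^{A}(M,S)\to\Tor_i^{A}(M,S^{+})$ passes through $\Tor_i^{A}(M,A^{+})=0$, and injectivity of the second arrow forces $\Tor_i^{A}(M,R)\to\Tor_i^{A}(M,S)$ to vanish. To build the factorization, set $\mathfrak{q}'=\ker(R_{\red}\to S)$, choose a minimal prime $\mathfrak{p}_{0}\subseteq\mathfrak{q}'$ of $R_{\red}$, and put $D=R_{\red}/\mathfrak{p}_{0}$; since the nonzero elements of the domain $A$ are nonzerodivisors on $R$ while $\mathfrak{p}_{0}$ consists of zerodivisors, $\mathfrak{p}_{0}\cap A=0$, so $D$ is a domain module-finite over $A$ with $A\hookrightarrow D$, and $R\to S$ factors as $R\to D\twoheadrightarrow D/\overline{\mathfrak{q}'}\hookrightarrow S$ with $\overline{\mathfrak{q}'}=\mathfrak{q}'/\mathfrak{p}_{0}$. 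Fix an embedding $D\hookrightarrow A^{+}$ extending $A\hookrightarrow A^{+}$ and a prime $\mathfrak{q}^{+}$ of $A^{+}$ lying over $\overline{\mathfrak{q}'}$; then $A^{+}/\mathfrak{q}^{+}$ is an absolutely integrally closed domain integral over $D/\overline{\mathfrak{q}'}$, hence is its absolute integral closure, and since $D/\overline{\mathfrak{q}'}\hookrightarrow S$ with $S$ regular local it embeds into $S^{+}$ compatibly with $D/\overline{\mathfrak{q}'}\hookrightarrow S\hookrightarrow S^{+}$. The resulting chain $R\to D\hookrightarrow A^{+}\twoheadrightarrow A^{+}/\mathfrak{q}^{+}\hookrightarrow S^{+}$ is, by construction, exactly the given map $R\to S\hookrightarrow S^{+}$, so the theorem follows in characteristic $p$ (this is in substance Hochster--Huneke's argument).

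For $\Char A=0$ one descends $A\to R\to S$ and $M$ to a finitely generated $\mathbb{Z}$-subalgebra of the ground field, inverting finitely many elements so that regularity, module-finiteness, torsion-freeness, and flatness of the relevant free resolutions persist; the characteristic $p$ case then applies to a dense set of closed fibers, and since the cokernel of the map of $\Tor$'s is a finitely generated module over the base it must vanish at the characteristic $0$ point once it vanishes on a dense set of closed ones. I expect the hard part to be organizational rather than conceptual: carrying out the reductions carefully---in particular absorbing the facts that $R$ may fail to be a domain and $A\to S$ may fail to be injective into the single factorization through $A^{+}$ above---and executing the descent to characteristic $p$, which for non-finite-type $A$ and $S$ also requires Artin approximation to first pass to the finite type situation. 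The one genuinely deep ingredient, the existence of big Cohen--Macaulay algebras in characteristic $p$, is used here purely as a black box.
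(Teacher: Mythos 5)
Your proposal is a faithful reconstruction of the original Hochster--Huneke argument, which is exactly what the paper cites for Theorem~\ref{theorem--vanishing theorem for maps of Tor} rather than reproves: the reductions to $A$ and $S$ complete regular local, the factorization of $R\to S^{+}$ through $A^{+}$ after passing to $R_{\red}/\mathfrak{p}_{0}$, the faithful flatness of $A^{+}$ over the regular local base to kill $\Tor_i^A(M,A^{+})$, the purity (splinter property) of $S\to S^{+}$ to make $\Tor_i^A(M,S)\to\Tor_i^A(M,S^{+})$ injective, and reduction to characteristic $p$ for the characteristic~$0$ case. In characteristic $p$ the paper's own treatment, via Theorem~\ref{theorem--key theorem}, also boils down to the big Cohen--Macaulay property of $R^{+}$, and the paper says as much in Remark~\ref{remark--key theorem extehds original vanishing theorems}(1), so there you and the paper agree. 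The genuine divergence is in characteristic~$0$: where you reduce mod $p$, the paper's Key Theorem~\ref{theorem--key theorem} proves the statement natively by passing to a resolution of singularities $g\colon Z\to\Spec A$, using a spectral-sequence/truncation argument together with local duality and Grauert--Riemenschneider--Koll\'{a}r vanishing to show $h^{-i}(F_{\bullet}\otimes\mathbf{R}g_{*}O_{Z})=0$ for $i>0$, and then splitting off $O_X\to\mathbf{R}q_{*}O_{W}$ via the derived splinter hypothesis. As the paper stresses in Remark~\ref{remark--key theorem extehds original vanishing theorems}(2), this is not a cosmetic difference: the reduction-to-$p$ route cannot yield the paper's generalization from regular $S$ to derived splinters, because a characteristic-$0$ rational singularity reduces mod $p$ only to an $F$-rational ring, which is strictly weaker than a splinter and does \emph{not} satisfy the vanishing conditions for maps of Tor. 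Your approach thus proves the cited theorem but does not scale to the paper's main results. One small repair to your write-up: to see $\mathfrak{p}_{0}\cap A=0$, argue that $\mathfrak{p}_{0}$ corresponds to a minimal (hence associated) prime of $R$ itself, so its elements are zerodivisors \emph{on $R$}; as stated you invoke zerodivisors on $R_{\red}$, which is not quite what the torsion-freeness of $R$ over $A$ controls.
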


It is also conjectured by Hochster and Huneke that Theorem \ref{theorem--vanishing theorem for maps of Tor} holds in mixed characteristic. This is one of the well-known homological conjectures: {\it the vanishing conjecture for maps of Tor}. The importance of Theorem \ref{theorem--vanishing theorem for maps of Tor}, as well as the corresponding conjecture in mixed characteristic, lies in the fact that, in any characteristic, it implies both the direct summand conjecture and the conjecture that direct summands of regular rings are Cohen-Macaulay \cite{HochsterHunekeApplicationsofbigCMalgebras}. Indeed, it was shown in \cite{Ranganathanthesis} that the vanishing conjecture for maps of Tor is equivalent to a strong form of the direct summand conjecture (we refer to \cite{Ranganathanthesis} for details).

In fact, results very similar to Theorem \ref{theorem--vanishing theorem for maps of Tor} were first proved in \cite{HochsterHunekePhantomhomology}, in characteristic $p>0$ only, using tight closure and phantom homology theory.\footnote{It is pointed out in the introduction of \cite{HochsterHunekePhantomhomology} that by reduction to characteristic $p>0$, one can develop the corresponding theory in characteristic $0$. The full results in \cite{HochsterHunekePhantomhomology} are, in some sense, even stronger than Theorem \ref{theorem--vanishing theorem for maps of Tor}, but are slightly technical to state here. However we point out that all these (stronger) results can be established by the argument used in \cite{HochsterHunekeApplicationsofbigCMalgebras}. Our method can provide generalizations of these results also, both in characteristic $p>0$ and characteristic $0$, see Remark \ref{remark--key theorem extehds original vanishing theorems}.} The proof given in \cite{HochsterHunekeApplicationsofbigCMalgebras} makes crucial use of the existence of weakly functorial balanced big Cohen-Macaulay algebras in equal characteristic. In characteristic $p>0$, the existence of such algebras follows directly from \cite{HochsterHunekeInfiniteintegralextensionsandbigCMalgebras}, where it was shown that the absolute integral closure $R^+$ is such an algebra. In characteristic $0$, the construction of weakly functorial balanced big Cohen-Macaulay algebras depends on a very delicate and difficult reduction to characteristic $p>0$ argument (we refer to Section 3 of \cite{HochsterHunekeApplicationsofbigCMalgebras} for details). In mixed characteristic, the analogy of Theorem \ref{theorem--vanishing theorem for maps of Tor} is known when $A$, $R$, $S$ all have dimension less than or equal to three \cite{HochsterBigCohen-Macaulayalgebrasindimensionthree}, based on Heitmann's results \cite{HeitmannDirectsummandconjectureindimensionthree}. However, in general the vanishing conjecture for maps of Tor is wide open in mixed characteristic.

In this paper, we investigate Theorem \ref{theorem--vanishing theorem for maps of Tor} in some new and different ways. We study the ``converse" of Theorem \ref{theorem--vanishing theorem for maps of Tor} in the following sense: in a given characteristic, for which local domain $S$, the map $\Tor_i^A(M, R)\to\Tor^A_i(M, S)$ vanishes for every $A\to R\to S$ and every $A$-module $M$ (where $A$ is regular and $A\to R$ is a module-finite torsion-free extension)? We will say such $S$ {\it satisfies the vanishing conditions for maps of Tor} (see Section 2 for precise definitions). We will show that, in all characteristics, such vanishing conditions imply $S$ has only pseudo-rational singularities, which is a characteristic-free analogue of rational singularities. Our main result in equal characteristic is the following:

\begin{theorem}[=Theorem \ref{theorem--main theorem}]
\label{theorem--main theorem in equal characteristic}
Let $S$ be a local domain that is essentially of finite type over a field. The following are equivalent:
\begin{enumerate}
\item $S$ satisfies the vanishing conditions for maps of Tor.
\item $S$ is a derived splinter.
\item For every regular local ring $A$ with $S=A/P$ and every module-finite torsion-free extension $A\to B$ with $Q\in \Spec B$ lying over $P$, $P\to Q$ splits as $A$-modules.
\end{enumerate}
\end{theorem}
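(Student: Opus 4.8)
The proof proposal for Theorem~\ref{theorem--main theorem in equal characteristic} revolves around the following strategy: interpret all three conditions as statements about splitting of maps in the derived category, and use the theory of (derived) splinters together with the existence of weakly functorial big Cohen--Macaulay algebras in equal characteristic. The plan is to prove the cycle of implications $(2)\Rightarrow(1)\Rightarrow(3)\Rightarrow(2)$, with the first implication being essentially the hard direction.

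First I would treat $(2)\Rightarrow(1)$. Suppose $S=A/P$ is a derived splinter, where $A$ is regular local and $A\to R\to S$ with $A\to R$ module-finite and torsion-free. The key input is Bhatt's characterization of derived splinters in equal characteristic (and its characteristic-$0$ counterpart via reduction mod $p$), which says that $S\to Rf_*\O_Y$ splits in $D(S)$ for every proper surjective $f\colon Y\to \Spec S$; more to the point, one wants a version phrased in terms of finite covers. After replacing $R$ by a suitable module-finite domain extension, one reduces to showing the composite $S\to R'\to S$ structure behaves like a splitting up to the relevant Tor terms. The actual mechanism is Hochster--Huneke's original argument from \cite{HochsterHunekeApplicationsofbigCMalgebras}: build a weakly functorial balanced big Cohen--Macaulay $A$-algebra $B$ receiving compatible maps from $R$ and $S$, use that $\Tor_i^A(M,B)=0$ for $i\geq 1$ (flatness of big CM algebras over regular rings), and factor $\Tor_i^A(M,R)\to\Tor_i^A(M,S)$ through $\Tor_i^A(M,B)=0$. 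The role of the derived splinter hypothesis on $S$ is to guarantee that the map $S\to B$ one constructs can be chosen so that $B$ also carries a compatible $S$-algebra structure making the big triangle commute; this is where Bhatt's result that derived splinters are exactly the rings mapping compatibly into big CM algebras (after base change) enters. The hard part will be arranging the weak functoriality: one must produce a single big CM algebra $B$ through which both $R\to S$ and the regular base $A$ act coherently, and this requires the full strength of the equal-characteristic big CM machinery (in characteristic $0$ via \cite{HochsterHunekeApplicationsofbigCMalgebras}, Section~3).

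Next, $(1)\Rightarrow(3)$ should be comparatively soft. Given a regular local $A$ with $S=A/P$ and $A\to B$ module-finite torsion-free with $Q\in\Spec B$ over $P$, apply the vanishing condition with $R=B$ and the specific module $M=A/P=S$: the map $\Tor_0^A(S,B)=B/PB\to \Tor_0^A(S,S)=S$ together with the higher vanishing of $\Tor_i^A(S,B)\to\Tor_i^A(S,S)$ for $i\geq 1$ forces, via a Koszul-resolution computation and the long exact sequences coming from $0\to P\to A\to S\to 0$, that the inclusion $P\hookrightarrow Q$ (equivalently $P\hookrightarrow A\to B$ landing in $Q$) splits over $A$. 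Concretely, one resolves $S$ by a finite free $A$-complex, tensors with $B$ and with $S$, and reads off that the vanishing of all the Tor maps is equivalent to a lifting property that produces the splitting $P\to Q$ directly. The one subtlety is passing from a map $B\to$ (something) to a map landing in the prime $Q$; this is handled by localizing and using that $Q$ lies over $P$.

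Finally, $(3)\Rightarrow(2)$: I would show that the splitting condition in $(3)$, applied to all module-finite torsion-free extensions and all presentations $S=A/P$, forces $S$ to be a derived splinter. The natural route is via Bhatt's local-cohomology criterion: $S$ is a derived splinter iff for every finite extension $S\to T$ (more generally every alteration), the map $H^i_{\n}(S)\to H^i_{\n}(T)$ is injective for all $i$. One translates a given finite extension $S\to T$ into data over a regular ring: choose a regular local $A$ surjecting onto $S$ with kernel $P$, lift $T$ to a module-finite torsion-free extension $A\to B$ with a prime $Q$ over $P$ and $B/Q\cong T$ (Noether normalization plus a standard construction), and observe that the hypothesized $A$-splitting $P\to Q$ induces, after quotienting, an $S$-linear splitting of $S\to T$ — hence in particular $H^i_{\n}(S)\to H^i_{\n}(T)$ is split injective. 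Since this holds for all finite $S\to T$, and in equal characteristic splinter $=$ derived splinter is not automatic (that is precisely Bhatt's theorem in characteristic $p$, and the characteristic-$0$ case reduces to it), one must invoke the equal-characteristic equivalence of splinters and derived splinters to upgrade "$S$ is a splinter" to "$S$ is a derived splinter." I expect the main obstacle across the whole argument to be the construction in $(2)\Rightarrow(1)$ of the weakly functorial big Cohen--Macaulay algebra compatible with the derived-splinter structure on $S$; everything else is bookkeeping with free resolutions, local cohomology, and citations to Bhatt's and Hochster--Huneke's theorems already recalled in the introduction.
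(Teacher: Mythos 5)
Your proposed cycle $(2)\Rightarrow(1)\Rightarrow(3)\Rightarrow(2)$ is a reasonable skeleton, and your treatment of $(1)\Rightarrow(3)$ is at least in the spirit of the paper's argument (the paper reduces to the case $A\twoheadrightarrow S$ surjective via Lemma~\ref{lemma--reductions to check VCT}, forms $R_0=A+Q$, and shows via a diagram chase that vanishing of Tor for \emph{all} $M$ makes $P\otimes_A M\to Q\otimes_A M$ injective for all $M$, which gives the splitting by the Hochster--Roberts criterion — so you do need the full range of $M$, not just $M=S$). However, there are two genuine gaps in the remaining directions.

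First, in $(2)\Rightarrow(1)$ your plan routes everything through weakly functorial big Cohen--Macaulay algebras and a claimed ``Bhatt's result that derived splinters are exactly the rings mapping compatibly into big CM algebras.'' No such characterization exists in the literature, and the mechanism you describe does not close in characteristic $0$: the Hochster--Huneke big-CM argument works for $S$ regular (or more generally when $S\to B_S$ is pure, which in characteristic $p>0$ holds for splinters mapping to $S^+$), but for a characteristic-$0$ derived splinter (i.e., a rational singularity) there is no analogue of $S^+$ and no purity of $S\to B_S$ to fall back on. The paper explicitly remarks (Remark~\ref{remark--key theorem extehds original vanishing theorems}) that its characteristic-$0$ result does \emph{not} seem to follow from reduction to characteristic $p$, precisely because rational singularities reduce only to $F$-rational rings, which are strictly weaker than derived splinters. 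What the paper actually does in characteristic $0$ is the Key Theorem (Theorem~\ref{theorem--key theorem}): tensor the free complex $F_\bullet$ (satisfying the standard rank-and-height conditions) with $\mathbf{R}g_*O_Z$ for a resolution $Z$, run a \v{C}ech spectral sequence to localize the negative cohomology at $\m$, and then kill it by Grothendieck duality plus Koll\'ar's vanishing of higher direct images of dualizing sheaves. That machinery is the missing idea and is not replaceable by the big-CM route.

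Second, your $(3)\Rightarrow(2)$ argument passes through ``$S$ is a splinter'' and then invokes ``the equal-characteristic equivalence of splinters and derived splinters.'' That equivalence is a theorem of Bhatt only in characteristic $p>0$; in characteristic $0$, splinter is equivalent merely to normality (trace maps always split finite extensions), which is far weaker than having rational singularities. So the upgrade you want simply fails in characteristic $0$. The paper instead proves $(1)\Rightarrow(2)$ in characteristic $0$ by showing that the vanishing conditions for maps of Tor imply the vanishing conditions for maps of local cohomology (Proposition~\ref{proposition--VCT implies VCLC and pseudo-rational}), which in turn imply pseudo-rationality (Lemma~\ref{lemma--VCLC implies pseudo-rational}, via the Sancho de Salas sequence), and then uses pseudo-rational $=$ rational $=$ derived splinter in characteristic $0$. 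You would need to replace the ``splinter implies derived splinter'' step in characteristic $0$ with a local-cohomology/pseudo-rationality argument of this type; as written, your step would only yield that $S$ is normal.
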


We note that derived splinters are formally introduced by Bhatt in \cite{BhattDerivedsplintersinpositivecharacteristic}, and are well understood in equal characteristic: they are equivalent to rational singularities in characteristic $0$ \cite{KovacsAcharacterizationofrationalsingularities}, \cite{BhattDerivedsplintersinpositivecharacteristic} and in characteristic $p>0$, they turn out to be the same as splinters \cite{BhattDerivedsplintersinpositivecharacteristic} (see Section 2 for precise definitions for splinters and derived splinters). In fact, at least in characteristic $0$, the idea of derived splinters plays a crucial role in our proofs.

As regular local rings in equal characteristic are derived splinters, Theorem \ref{theorem--main theorem in equal characteristic} $(1)\Leftrightarrow(2)$ greatly extends Theorem \ref{theorem--vanishing theorem for maps of Tor}. We will see in Remark \ref{remark--VCT generalizes Boutot's theorem} that Theorem \ref{theorem--main theorem in equal characteristic} also generalizes Boutot's theorem that direct summands of rational singularities are rational singularities \cite{BoutotRationalsingularitiesandquotientsbyreductivegroups} (Boutot's theorem follows from the vanishing of Tor applied to $M=E_A$, the injective hull of $A$). Moreover, as an immediate consequence of Theorem \ref{theorem--main theorem in equal characteristic} $(2)\Leftrightarrow(3)$, we obtain the following new characterization of rational singularities. We find this characterization surprising as it only addresses splittings in module-finite extensions.

\begin{corollary}[=Corollary \ref{corollary--main corollary}]
\label{corollary--new characterization of rational singularities}
Let $(S,\n)$ be a local domain essentially of finite type over a field of characteristic $0$. Then $S$ has rational singularities if and only if for every regular local ring $A$ with $S=A/P$, every module-finite torsion-free extension $A\to T$, and every $Q\in\Spec T$ lying over $P$, the map $P\to Q$ splits as a map of $A$-modules.
\end{corollary}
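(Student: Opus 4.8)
The plan is to obtain Corollary~\ref{corollary--new characterization of rational singularities} as a formal consequence of Theorem~\ref{theorem--main theorem in equal characteristic}, so the proof will be short. First I would recall that over a field of characteristic $0$ a local ring essentially of finite type has rational singularities if and only if it is a derived splinter: this is Kovács' characterization of rational singularities \cite{KovacsAcharacterizationofrationalsingularities} together with Bhatt's work on derived splinters \cite{BhattDerivedsplintersinpositivecharacteristic}. Thus the hypothesis ``$(S,\n)$ has rational singularities'' is exactly condition $(2)$ of Theorem~\ref{theorem--main theorem in equal characteristic}.

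Next I would note that a local domain $S$ essentially of finite type over a field always admits a presentation $S = A/P$ with $(A,\m)$ a regular local ring essentially of finite type over the same field (localize a polynomial ring at the relevant prime and use that regular local rings of every dimension occur in this way; passing to a power series ring is also possible but unnecessary here). Consequently condition $(3)$ of Theorem~\ref{theorem--main theorem in equal characteristic} — namely that for \emph{every} such presentation $S = A/P$, every module-finite torsion-free extension $A \to T$, and every $Q \in \Spec T$ lying over $P$, the inclusion $P \hookrightarrow Q$ splits as a map of $A$-modules — is precisely the splitting condition stated in the corollary. The equivalence $(2)\Leftrightarrow(3)$ of Theorem~\ref{theorem--main theorem in equal characteristic} then yields the corollary at once.

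The only points that need even minor attention are that such a regular presentation $S = A/P$ exists and that the splitting condition does not depend on the choice of presentation; the latter is automatic because condition $(3)$ is quantified over all presentations. I do not expect any genuine obstacle here: all of the substance is already contained in Theorem~\ref{theorem--main theorem in equal characteristic} and in the identification of derived splinters with rational singularities in characteristic $0$, and the corollary merely repackages the $(2)\Leftrightarrow(3)$ equivalence in geometric language, emphasizing that rationality of singularities is detected purely by splittings inside module-finite extensions.
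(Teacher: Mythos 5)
Your proof is correct and matches the paper's own argument exactly: both derive the corollary from the equivalence $(2)\Leftrightarrow(3)$ of Theorem~\ref{theorem--main theorem in equal characteristic} together with the identification of derived splinters with rational singularities in characteristic $0$. Nothing more is needed.
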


This paper is organized as follows. In Section 2, we recall and review the basic theories, and we introduce two important concepts: the vanishing conditions for maps of Tor and the vanishing conditions for maps of local cohomology. The rest of the paper is devoted to the proof of Theorem \ref{theorem--main theorem in equal characteristic}. In Section 3 we show that the vanishing conditions for maps of Tor implies pseudo-rationality. In Section 4 we prove $(1)\Leftrightarrow(3)$ in Theorem \ref{theorem--main theorem in equal characteristic}. Finally, in Section 5 we prove $(1)\Leftrightarrow(2)$ in Theorem \ref{theorem--main theorem in equal characteristic} and we also prove some partial results in mixed characteristic: for example, we show that the vanishing conjecture for maps of Tor implies the derived direct summand conjecture. Throughout this paper, unless otherwise stated, we will make the following assumptions on commutative rings and schemes (we will sometimes repeat and emphasize these conditions):
\begin{enumerate}
\item All rings are Noetherian, excellent and are homomorphic image of regular rings.
\item All schemes are Noetherian, separated, excellent that admit dualizing complexes.
\item In characteristic $0$, all rings and schemes are essentially of finite type over a field.
\end{enumerate}

We point out that (1) and (2) are very mild conditions (e.g., all rings essentially of finite type over a complete local ring satisfy (1)). We make the assumption (3) mainly because we need to apply the Grauert-Riemenschneider type vanishing theorems \cite{GrauertRiemenschneiderVerschwindungssatze}, \cite{KollarHigherdirectimagesofdualizingsheaves1} in characteristic $0$.


\section{Definitions and Preliminaries}

We begin with some basic definitions of plus closure. Let $S$ be an integral domain and $I\subseteq S$ be an ideal. The {\it plus closure} of $I$, $I^+$, is the set of elements $x\in S$ such that $x\in IT$ for some module-finite extension $T$ of $S$. $I$ is called plus closed if $I^+=I$. The {\it absolute integral closure} of $S$, denoted by $S^+$, is the integral closure of $S$ in the algebraic closure of the fraction field of $S$, which is also the direct limit of all the module-finite domain extensions of $S$ \cite{HochsterHunekeInfiniteintegralextensionsandbigCMalgebras}. It follows that $I^+=IS^+\bigcap S$. The plus closure of $0$ in $H_\n^d(S)$, the top local cohomology module, is defined as $0^+_{H_\n^d(S)}=\ker (H_\n^d(S)\to H_\n^d(S^+))$.

A domain $S$ (resp., An integral scheme $X$) is called a {\it splinter}, if for every module-finite extension $T$ of $S$ (resp., every finite surjective map $Y\to X$), the natural map $S\to T$ (resp., $O_X\to O_Y$) is split in the category of $S$-modules (resp., $O_X$-modules). It is easy to see that $S$ is a splinter if and only if every ideal in $S$ is plus closed.

Let $(S,\n)$ be an excellent local domain of characteristic $p>0$. The top local cohomology module $H_\n^{d}(S)$ has a natural Frobenius action. In this situation, there is a unique largest proper submodule of $H_\n^{d}(S)$ that is stable under the Frobenius action, which is $0^*_{H_\n^{d}(S)}$, the tight closure of $0$ in $H_\n^{d}(S)$ \cite{SmithFRatImpliesRat}. $(S,\n)$ is called {\it $F$-rational}, if it is Cohen-Macaulay and $0^*_{H_\n^{d}(S)}=0$ \cite{HochsterHunekeFRegularityTestElementsBaseChange}, \cite{SmithFRatImpliesRat}. This is not the original definition of $F$-rationality, but it turns out to be extremely useful in many applications. It is worth mentioning that a deep result of Smith \cite{Smithtightclosureofparameterideals} shows that $0^*_{H_\n^{d}(S)}=0^+_{H_\n^{d}(S)}$, which we will need in Section 5.

We make some more comments on splinters. In equal characteristic $0$, using the trace map, it is straightforward to check that splinters are exactly normal schemes. However, even in equal characteristic $p>0$ in the affine case, splinters are quite mysterious. It is known that affine splinters in characteristic $p>0$ are always $F$-rational \cite{Smithtightclosureofparameterideals} \cite{BhattDerivedsplintersinpositivecharacteristic}, and it is conjectured that they are {\it $F$-regular}, which is a natural strengthening of $F$-rationality and an important concept in tight closure theory.\footnote{As we will not use deep results in tight closure theory, we omit the precise definition of $F$-regularity (and the original definition of $F$-rationality). We refer to \cite{HochsterHunekeTC1} for details on tight closure theory.} We refer to \cite{SinghQGorensteinsplintersareFregular} and \cite{ChiecchioEnescuMillerSchwedeTestidealswithfinitelygeneratedanticanonical} for the best partial results on this conjecture. In mixed characteristic, our knowledge about splinters is minimal: Hochster's famous {\it direct summand conjecture} asserts that regular local rings are splinters. This conjecture is known in dimension $\leq 3$ \cite{HeitmannDirectsummandconjectureindimensionthree}, and is open (in mixed characteristic) in dimension $\geq 4$.

Following \cite{BhattDerivedsplintersinpositivecharacteristic}, we say an integral scheme $X$ is a {\it derived splinter}, if for any proper surjective map $f$: $Y\to X$, the pullback map $O_X\to \mathbf{R}f_*O_Y$ is split in the derived category $D(\Coh(X))$ of coherent sheaves on $X$. This is the same as requiring $O_X\to \mathbf{R}f_*O_Y$ to split in $D(\QCoh(X))$, the derived category of quasi-coherent sheaves on $X$. It is easy to see that derived splinters are splinters. It was first observed in \cite{KovacsAcharacterizationofrationalsingularities} that derived splinters in characteristic $0$ coincide with rational singularities,\footnote{This was proved in \cite{KovacsAcharacterizationofrationalsingularities} when $Y\to X$ has connected fibres (which was suffices for the applications in \cite{KovacsAcharacterizationofrationalsingularities}). A complete proof was given in Theorem 2.12 in \cite{BhattDerivedsplintersinpositivecharacteristic}.}
while it was shown in \cite{BhattDerivedsplintersinpositivecharacteristic} that, quite surprisingly, derived splinters are equivalent to splinters in characteristic $p>0$.

Next we recall pseudo-rational singularities \cite{LipmanTeissierPseudorationallocalringsandatheoremofBrianconSkoda}: A $d$-dimensional local ring $(R,\m)$ is called {\it pseudo-rational} if it is normal, Cohen-Macaulay, analytically unramified (i.e., the completion $\widehat{R}$ is reduced), and if for every proper, birational map  $\pi$: $W\to \Spec R$ with $W$ normal, the canonical map $H_\m^d(R)\to H_E^d(W, O_W)$ is injective where $E=\pi^{-1}(\m)$ denotes the closed fibre. Pseudo-rationality is a property of local rings which is an analog of rational singularities for more general schemes, e.g., rings which may not have a desingularization. When the ring is essentially of finite type over a field of characteristic $0$, pseudo-rational singularities are the same as rational singularities. In characteristic $p$, pseudo-rationality is slightly weaker than $F$-rationality \cite{SmithFRatImpliesRat}, \cite{HaraRatImpliesFRat}.

We summarize the relations between these concepts. In characteristic $0$, we have:
\[\mbox{derived}~\mbox{splinter}=\mbox{rational}~\mbox{singularities}=\mbox{pseudo-rational}\Longrightarrow \mbox{splinter}.\]
In characteristic $p>0$, we have:
\[\mbox{derived}~\mbox{splinter}=\mbox{splinter}\Longrightarrow F\mbox{-rational}\Longrightarrow\mbox{pseudo-rational}.\]

Now we introduce the central concepts that we will study in this paper:

\begin{definition}
\label{definition--VCT} We say a local domain $(S,\n)$ satisfies the vanishing conditions for maps of Tor, if for every $A\to R\to S$ such that $A$ is a regular domain, $A\to R$ is a module-finite torsion-free extension, and $A$, $R$, $S$ have the same characteristic,\footnote{This means $A, R, S$ all have equal characteristic, i.e., they all contain a field, or they all have mixed characteristic (i.e., the characteristic of the ring is different from that of its residue field).} the natural map $\Tor_i^A(M, R)\to\Tor_i^A(M, S)$ vanishes for every $A$-module $M$ and every $i\geq 1$.
\end{definition}

It is also quite natural to ask that: if $(R,\m)\twoheadrightarrow (S,\n)$ is a surjection of local domains, when does $H_\m^j(R)\to H_\n^j(S)$ vanish for every $j<\dim R$? (this is inspired by Corollary 4.24 of \cite{HochsterHunekePhantomhomology}, which itself is a consequence of Theorem \ref{theorem--vanishing theorem for maps of Tor}). Hence similar to the vanishing conditions for maps of Tor, we want to introduce certain vanishing conditions for maps of local cohomology. Since there are several equivalent ways to define this, we summarize them into a proposition.

\begin{proposition/definition}
\label{definition--VCLC}
Let $(S,\n)$ be a local domain of dimension $d$. Then the following are equivalent (we always assume $R$, $S$ have the same characteristic):
\begin{enumerate}
\item For every surjection $(R,\m) \twoheadrightarrow (S,\n)$ with $(R,\m)$ equidimensional, the induced map $H_\m^j(R)\to H_\n^j(S)$ vanishes for every $j<\dim R$.
\item For every surjection $(R,\m) \twoheadrightarrow (S,\n)$ with $(R,\m)$ a local domain, the induced map $H_\m^j(R)\to H_\n^j(S)$ vanishes for every $j<\dim R$.
\item $S$ is Cohen-Macaulay and for every surjection $(R,\m) \twoheadrightarrow (S,\n)$ such that $(R,\m)$ is a local domain with $\dim R>d$, the induced map $H_\m^d(R)\to H_\n^d(S)$ vanishes.
\item $S$ is Cohen-Macaulay and for every surjection $(R,\m) \twoheadrightarrow (S,\n)$ such that $\dim R/P>d$ for every minimal prime of $P$ of $R$,  the induced map $H_\m^d(R)\to H_\n^d(S)$ vanishes.
\end{enumerate}
We say $(S,\n)$ satisfies the vanishing conditions for maps of local cohomology, if it satisfies the above equivalent conditions.
\end{proposition/definition}

\begin{proof} $(1)\Rightarrow(2)$: This is obvious.

$(2)\Rightarrow(3)$: Applying $(2)$ to $R=S$, we get that the identity map $H_\n^j(S)\to H_\n^j(S)$ vanishes for every $j<\dim S=d$. Thus $S$ is Cohen-Macaulay. The remaining part is obvious (note that one cannot apply $(3)$ to $R=S$, because the hypothesis on $R$ in $(3)$ forces $\dim R>d$).

$(3)\Rightarrow(4)$: Since $S$ is a domain, every surjection $R\twoheadrightarrow S$ factors through $R\twoheadrightarrow R'\twoheadrightarrow S$, where $R'=R/P$ for some minimal prime $P$ of $R$. Now $(3)$ implies $H_\m^d(R')\to H_\n^d(S)$ vanishes because $\dim R'=\dim R/P>d$. Thus $H_\m^d(R)\to H_\n^d(S)$ also vanishes.

$(4)\Rightarrow(1)$: If $\dim R=\dim S=d$ (i.e., $R=S$) in $(1)$, then $H_\m^j(R)\to H_\n^j(S)$ vanishes for every $j<\dim R=d$ because $H_\n^j(S)=0$ ($S$ is Cohen-Macaulay). Otherwise we have $\dim R>d$. Since $R$ is equidimensional, $\dim R/P>d$ for every minimal prime $P$ of $R$. Thus applying $(4)$, we know that $H_\m^d(R)\to H_\n^d(S)$ vanishes.
\end{proof}

\begin{remark}
One cannot expect that $H_\m^d(R)\to H_\n^d(S)$ vanish for all $R\twoheadrightarrow S$ with $\dim R>d$, even when $S$ is regular. For example, let $R=\frac{k[[x,y,z]]}{(x,y)\bigcap(z)}$ and $S=k[[z]]$. We know that $\dim R=2$ and $\dim S=1$. But it is easy to check that $H_\m^1(R)\to H_\n^1(S)$ is surjective and hence does not vanish. The trouble here is that there is a component of $R$ that has the same dimension as $S$. Thus the hypotheses in Definition \ref{definition--VCLC} (1)--(4) are necessary.
\end{remark}

We will see in later sections that the vanishing conditions for Tor and for local cohomology are deeply related: Proposition \ref{proposition--VCT implies VCLC and pseudo-rational}, Theorem \ref{theorem--main theorem on VCLC}.



\section{Vanishing of Tor, vanishing of local cohomology and pseudo-rationality}

In this section we will show that the vanishing conditions for maps of Tor implies pseudo-rationality, which will be a crucial ingredient in proving $(1)\Rightarrow(2)$ in Theorem \ref{theorem--main theorem in equal characteristic}. We also obtain many characteristic-free results of independent interest.

\begin{lemma}
\label{lemma--image contains the plus closure}
Let $(S,\n)$ be a local domain that is a homomorphic image of a regular ring. Then we have:
\begin{equation}
\label{equation--comparison of the sum of image with plus closure}
\sum_{R}\im(H_\m^d(R)\to H_\n^d(S))\supseteq 0^+_{H_\n^d(S)}
\end{equation}
where the sum is taken over all $R\twoheadrightarrow S$ such that $\dim R/P>\dim S=d$ for every minimal prime $P$ of $R$. In particular, if $(S,\n)$ satisfies the vanishing conditions for maps of local cohomology, then we have $0^+_{H_\n^d(S)}=0$.
\end{lemma}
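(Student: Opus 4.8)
\emph{Proof plan.} The last assertion follows formally from the containment \eqref{equation--comparison of the sum of image with plus closure}: if $(S,\n)$ satisfies the vanishing conditions for maps of local cohomology, then by Proposition/Definition~\ref{definition--VCLC}(4) every summand $\im(H_\m^d(R)\to H_\n^d(S))$ on the left of \eqref{equation--comparison of the sum of image with plus closure} vanishes, forcing $0^+_{H_\n^d(S)}=0$. So the real content is \eqref{equation--comparison of the sum of image with plus closure}, and for this it suffices to fix an arbitrary $\eta\in 0^+_{H_\n^d(S)}$ and exhibit a single surjection $(R,\m)\twoheadrightarrow(S,\n)$ with $R$ a local domain of dimension $>d$ — so that $\dim R/P>d$ for its unique minimal prime $P=(0)$ — and with $\eta\in\im(H_\m^d(R)\to H_\n^d(S))$.

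First I would unwind the plus closure. Since $S^{+}=\varinjlim_{T}T$ over the module-finite domain extensions $T$ of $S$ and local cohomology commutes with direct limits, $0^+_{H_\n^d(S)}=\bigcup_{T}\ker\bigl(H_\n^d(S)\to H_\n^d(T)\bigr)$; so I may choose a module-finite domain extension $S\hookrightarrow T$ with $\eta\mapsto 0$ in $H_\n^d(T)$. Next write $S=A/P$ with $A$ regular local, arranging $\dim A>d$ (if necessary, replace $A$ by $A[[t]]$ and $P$ by $(P,t)$). Now I would build a module-finite torsion-free \emph{domain} extension $A\hookrightarrow B$ together with a prime $Q\in\Spec B$ lying over $P$ and an identification $B/Q\cong T$ compatible with $S=A/P\hookrightarrow T$, as follows: lift the monic integral equations of a finite set of $S$-algebra generators of $T$ to $A$, obtaining a finite free $A$-algebra $B_{0}$ with a surjection $B_{0}\twoheadrightarrow T$; let $Q_{0}$ be its (prime) kernel, pick a minimal prime $\bq\subseteq Q_{0}$ of $B_{0}$, and set $B:=B_{0}/\bq$ and $Q:=Q_{0}/\bq$. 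Faithful flatness of $B_{0}$ over $A$ forces $\bq\cap A=(0)$, so $B$ is a domain containing and finite over $A$ (hence torsion-free over $A$), and $Q\cap A=P$ while $B/Q=T$. Finally set $B':=\{\,b\in B:\ \overline b\in S\subseteq B/Q=T\,\}$, the preimage of $S$ in $B$: a module-finite $A$-subalgebra of $B$, a domain, with $Q\subseteq B'$ and $B'/Q=S$. Localizing $B'$ at the maximal ideal $\m$ lying over $\n$, set $R:=(B')_{\m}$; then $R$ is a local domain with $\dim R=\dim A>d$ and $R\twoheadrightarrow R/QR=S$.

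The cohomological heart is then a short diagram chase. The inclusion $B'\hookrightarrow B$ is a morphism of short exact sequences of $B'$-modules
\begin{CD}
0 @>>> Q @>>> B' @>>> S @>>> 0\\
@. @| @VVV @VVV @.\\
0 @>>> Q @>>> B @>>> T @>>> 0
\end{CD}
equal to the identity on the left-hand $Q$, to the inclusion in the middle, and to $S\hookrightarrow T$ on the right. Applying the $\delta$-functor $H_\m^{\bullet}(-)$ and comparing the two long exact sequences, the square formed by the connecting homomorphisms reads $\partial_{B'}=\partial_{B}\circ g$, where — after the canonical identifications $H_\m^d(S)=H_\n^d(S)$ and $H_\m^d(T)=H_\n^d(T)$ (which hold because $Q\subseteq\Ann_{B'}S$ and $\m$ induces the ideals $\n$ and $\n T$ on $S$ and $T$) — the maps $\partial_{B'}\colon H_\n^d(S)\to H_\m^{d+1}(Q)$ and $\partial_{B}\colon H_\n^d(T)\to H_\m^{d+1}(Q)$ are the connecting homomorphisms, $g\colon H_\n^d(S)\to H_\n^d(T)$ is the natural map induced by $S\hookrightarrow T$, and the vertical map $H_\m^{d+1}(Q)\to H_\m^{d+1}(Q)$ induced by $\id_{Q}$ is the identity. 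Since $g(\eta)=0$ by the choice of $T$, it follows that $\partial_{B'}(\eta)=0$, so
\[
\eta\in\ker\partial_{B'}=\im\bigl(H_\m^d(B')\to H_\n^d(S)\bigr)=\im\bigl(H_\m^d(R)\to H_\n^d(S)\bigr),
\]
the last equality because $H_\m^i(B')=H_{\m R}^i(R)$. Since $R$ is among the rings indexing the left-hand sum of \eqref{equation--comparison of the sum of image with plus closure}, this gives the containment.

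The step I expect to require the most care is the construction of $R$ rather than the diagram chase: producing a module-finite torsion-free extension $B$ with a prime $Q$ over $P$ realizing $T$ as $B/Q$, checking that the preimage $B'$ is again a domain of dimension exactly $\dim A$ (which is precisely why one first arranges $\dim A>d$), and tracking supports when passing between $B'$-module and $B$-module local cohomology. Once $B$, $Q$, and $B'$ are in place, the identity $\partial_{B'}=\partial_{B}\circ g$, hence $\eta\in\im(H_\m^d(R)\to H_\n^d(S))$, is formal.
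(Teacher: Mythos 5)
Your proof is correct and takes essentially the same route as the paper: lift the integral equations of $T$ to a finite free $A$-algebra $B_0$ mapping onto $T$, take the preimage of $S$ (the paper's $R = A+Q$, your $B'$), and run the long exact sequence in local cohomology on the morphism of short exact sequences $(Q \to B' \to S) \to (Q \to B \to T)$ to trap $\eta$ in $\operatorname{im}(H^d_\m(R)\to H^d_\n(S))$. The one cosmetic difference is that you first quotient $B_0$ by a minimal prime $\bq\subseteq Q_0$ so that $B$ and hence $R$ are domains, whereas the paper works directly with the possibly-non-domain free $A$-algebra and instead checks that $R=A+Q$ is equidimensional of dimension $\geq d+1$ --- both variants place $R$ in the indexing set of the sum, so the diagram chase is the same.
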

\begin{proof}
Since $0^+_{H_\n^d(S)}=\ker(H_\n^d(S)\to H_\n^d(S^+))=\cup_T \ker(H_\n^d(S)\to H_\n^d(T))$ where $T$ runs over all module-finite domain extensions of $S$. It suffices to show that
$\sum_{R}\im(H_\m^d(R)\to H_\n^d(S))\supseteq \ker(H_\n^d(S)\to H_\n^d(T))$ for every such $T$.

We write $S=A/P$ for some regular local ring $A$ such that $\dim A\geq d+1$. Let $t_1,\dots,t_n$ be a set of generators of $T$ over $S$. Since $T$ is integral over $S$, each $t_i$ satisfies a monic polynomial $f_i$ over $S$. We can lift each $f_i$ to $A$ and form the ring $B=\frac{A[x_1,\dots,x_n]}{(f_1,\dots,f_n)}$. We have a natural surjective map $B\twoheadrightarrow T$ with kernel $Q\in\Spec{B}$. It is clear that $Q$ lies over $P$ in $A$. Let $R=A+Q\subseteq B$. We know that $R/Q=A/P=S$. In sum, we have:
\[  \xymatrix{
   0 \ar[r] & Q  \ar[r] &    B  \ar[r]  & T \ar[r] & 0\\
   0 \ar[r] & Q  \ar[r]\ar[u]^\cong &    R  \ar[u] \ar[r]  & S \ar[r]\ar[u] & 0
} .\]
Let $\m$ be the pre-image of $\n$ in $R$. Because $B$ is free over $A$ and $R$ is a subring of $B$, $R$ is torsion-free over $A$. Now localizing at $\m$ if necessary, we know that $(R,\m)$ is equidimensional and $\dim R=\dim B=\dim A\geq d+1$. This guarantees that $\dim R/P>d$ for every minimal prime $P$ of $R$. The induced long exact sequences on local cohomology gives:
\[  \xymatrix{
    H_\m^d(B) \ar[r] & H_\m^d(T)=H_\n^d(T)  \ar[r] &    H_\m^{d+1}(Q)  \ar[r]  &  H_\m^{d+1}(B) \\
    H_\m^d(R) \ar[r] &    H_\m^d(S)=H_\n^d(S)  \ar[u] \ar[r]  & H_\m^{d+1}(Q) \ar[u]^\cong
} .\]
Chasing this diagram, it is easy to see that: $$\ker(H_\n^d(S)\to H_\n^d(T))\subseteq \im(H_\m^d(R)\to H_\n^d(S)).$$ This proves (\ref{equation--comparison of the sum of image with plus closure}). Finally, if $S$ satisfies the vanishing conditions for maps of local cohomology, then the left hand side of (\ref{equation--comparison of the sum of image with plus closure}) is $0$ by Definition \ref{definition--VCLC} (4), thus $0^+_{H_\n^d(S)}=0$.
\end{proof}

\begin{corollary}
\label{corollary--VCLC implies s.o.p plus closed}
If $(S,\n)$ satisfies the vanishing conditions for maps of local cohomology, then every ideal generated by a full system of parameters in $S$ is plus closed. In particular this implies $S$ is normal.
\end{corollary}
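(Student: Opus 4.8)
The plan is to deduce the statement from Lemma \ref{lemma--image contains the plus closure} together with the theory of plus closure for parameter ideals. First I would recall that since $(S,\n)$ satisfies the vanishing conditions for maps of local cohomology, condition (3) of Proposition/Definition \ref{definition--VCLC} (applied with $R=S$ in the appropriate sense via condition (2)) already forces $S$ to be Cohen-Macaulay; and Lemma \ref{lemma--image contains the plus closure} gives $0^+_{H_\n^d(S)}=0$. The key point is then that for a Cohen-Macaulay local domain, plus closure of a parameter ideal is detected by the top local cohomology module: if $x_1,\dots,x_d$ is a full system of parameters, then $H_\n^d(S)$ is computed as the direct limit of the Koszul-type quotients $S/(x_1^t,\dots,x_d^t)$, and an element $y$ lies in $(x_1,\dots,x_d)^+$ if and only if its image in $H_\n^d(S)$ (via the canonical map $S/(x_1,\dots,x_d)\to H_\n^d(S)$) lies in $0^+_{H_\n^d(S)}=\ker(H_\n^d(S)\to H_\n^d(S^+))$. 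Since the latter kernel vanishes, the map $S/(x_1,\dots,x_d)\to H_\n^d(S)$ being injective (because $S$ is Cohen-Macaulay, so $x_1,\dots,x_d$ is a regular sequence) forces $(x_1,\dots,x_d)^+=(x_1,\dots,x_d)$.

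The steps, in order, would be: (i) invoke the Cohen-Macaulay conclusion and Lemma \ref{lemma--image contains the plus closure} to get $0^+_{H_\n^d(S)}=0$; (ii) fix a full system of parameters $\underline{x}=x_1,\dots,x_d$ and recall that, $S$ being Cohen-Macaulay, the natural map $S/(\underline{x})\hookrightarrow H_\n^d(S)$ is injective and compatible with the corresponding map for any module-finite domain extension $T$ of $S$; (iii) take $y\in (\underline{x})^+$, so $y\in (\underline{x})T$ for some such $T$, whence the image of $y$ in $H_\n^d(T)$ is zero, so the image of $y$ in $H_\n^d(S)$ lies in $\ker(H_\n^d(S)\to H_\n^d(S^+))=0^+_{H_\n^d(S)}=0$; (iv) by injectivity of $S/(\underline{x})\hookrightarrow H_\n^d(S)$ conclude $y\in(\underline{x})$. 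Finally, to get normality: $S$ is Cohen-Macaulay, so it satisfies Serre's condition $(S_2)$, and it remains to check $(R_1)$; one standard route is that a Cohen-Macaulay domain in which every parameter ideal is plus closed (in fact integrally closed would suffice, and plus-closed parameter ideals give enough) is normal — alternatively one observes that $S$ being a splinter-like condition on parameters already implies $S$ is integrally closed in its fraction field since any element of the integral closure would produce a nontrivial plus-closure relation; I would cite \cite{Smithtightclosureofparameterideals} or argue directly that plus-closedness of parameter ideals together with Cohen-Macaulayness implies $S=S^+\cap \Frac(S)$ is normal.

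I expect the main obstacle to be step (iii)–(iv), specifically making precise the compatibility between ``$y\in(\underline{x})T$'' and ``the image of $y$ dies in $H_\n^d(T)$,'' and being careful that the map $H_\n^d(S)\to H_\n^d(T)$ is the one computing $0^+_{H_\n^d(S)}$; this is essentially the colimit description of local cohomology via systems of parameters and is routine but must be stated cleanly. The normality conclusion is the secondary subtlety: one must make sure the passage from ``parameter ideals are plus closed'' to ``$S$ is normal'' does not secretly require more than we have — here Cohen-Macaulayness plus the fact that $0^+_{H_\n^d(S)}=0$ (which, combined with $S=S^+\cap\Frac S$ type reasoning, yields $S$ integrally closed) is exactly what does the job, and I would phrase it via the standard fact that a Cohen-Macaulay local domain is normal as soon as one system of parameters (equivalently every parameter ideal) generates an integrally closed ideal, noting plus-closed $\Rightarrow$ integrally closed for parameter ideals by the determinant trick.
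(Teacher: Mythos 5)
Your argument for the main assertion — that parameter ideals are plus closed — is essentially identical to the paper's: combine Lemma~\ref{lemma--image contains the plus closure} (so $0^+_{H_\n^d(S)}=0$) with the injection $S/(\underline{x})\hookrightarrow H_\n^d(S)$ coming from Cohen--Macaulayness, and chase the commutative square against $S^+$. That part is fine.

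The normality step is where there is a real gap. You go from ``every full system-of-parameters ideal is plus closed'' to normality by appealing to ``$S=S^+\cap\Frac(S)$'' and to ``plus-closed $\Rightarrow$ integrally closed for parameter ideals by the determinant trick,'' but the determinant trick gives the \emph{opposite} containment $I^+\subseteq\overline{I}$ (an element of $IT$ with $T$ module-finite is integral over $I$ by Cayley--Hamilton), so by itself it does nothing toward showing a plus-closed ideal is integrally closed. What one actually needs, and what the paper supplies, is a two-step reduction that your proposal skips. First, from full systems of parameters to partial ones via the Krull-intersection trick: if $(x_1,\dots,x_t)$ is part of a system of parameters and $y\in(x_1,\dots,x_t)^+$, then $y\in(x_1,\dots,x_t,x_{t+1}^s,\dots,x_d^s)^+=(x_1,\dots,x_t,x_{t+1}^s,\dots,x_d^s)$ for all $s$, and intersecting over $s$ gives $y\in(x_1,\dots,x_t)$. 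In particular every principal ideal $(x)$ is plus closed. Second, for a nonzero $x$, $\overline{(x)}\subseteq\overline{(x)S^+}\cap S=(x)S^+\cap S=(x)^+=(x)$, where the middle equality uses that $S^+$ is a normal domain so its principal ideals are integrally closed. Hence every principal ideal of $S$ is integrally closed, which is exactly normality of the domain $S$. Your ``$S=S^+\cap\Frac(S)$'' hint is pointing at this argument, but without the reduction to principal ideals and without the correct use of normality of $S^+$ it is not a proof; and the parenthetical ``one system of parameters being integrally closed suffices'' is simply false once $\dim S>1$.

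So: the first (and main) conclusion is proved the same way as in the paper; the ``in particular $S$ is normal'' clause needs the intersection-over-powers reduction to principal ideals and the observation $\overline{(x)S^+}=(x)S^+$, neither of which your proposal supplies, while the step you do invoke (the determinant trick) points in the wrong direction.
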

\begin{proof}
Let $I=(x_1,\dots,x_d)$ be any ideal generated by a full system of parameters of $S$. Consider the commutative diagram:
\[  \xymatrix{
    H_\n^d(S)\ar@{^{(}->}[r] & H_\n^d(S^+) \\
    S/I \ar[r] \ar@{^{(}->}[u] &    S^+/IS^+ \ar[u]
} .\]
The left vertical map is injective because $S$ is Cohen-Macaulay by Definition \ref{definition--VCLC}, and the map in the top row is injective by Lemma \ref{lemma--image contains the plus closure}. Chasing this diagram we know that $S/I\hookrightarrow  S^+/IS^+$ is injective. This proves that $I$ is plus closed.

Finally, every ideal generated by a system of parameters is plus closed implies that every ideal generated by part of a system of parameters is plus closed: suppose $(x_1,\dots,x_t)$ is part of a system of parameters, contained in $(x_1,\dots,x_t,x_{t+1},\dots,x_d)$. If $y\in (x_1,\dots,x_t)^+$, then $y\in (x_1,\dots,x_t,x_{t+1}^s,\dots,x_d^s)^+=(x_1,\dots,x_t,x_{t+1}^s,\dots,x_d^s)$ for every $s>0$. So \[y\in \bigcap_s(x_1,\dots,x_t,x_{t+1}^s,\dots,x_d^s)=(x_1,\dots,x_t).\]
In particular, we know that every principal ideal is plus closed. Let $y\in \overline{(x)}$, the integral closure of the ideal generated by $x$. Then $y\in \overline{(x)R^+}=(x)R^+$ because $R^+$ is integrally closed. So $y\in (x)^+=(x)$. This proves every principal ideal is integrally closed and hence $S$ is normal.
\end{proof}

\begin{lemma}
\label{lemma--VCLC implies pseudo-rational}
If $S$ satisfies the vanishing conditions for maps of local cohomology, then $S$ is pseudo-rational.
\end{lemma}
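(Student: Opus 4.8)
The goal is to upgrade Corollary~\ref{corollary--VCLC implies s.o.p plus closed} (which gives normality, Cohen--Macaulayness, and that parameter ideals are plus closed) to full pseudo-rationality. Since we already know $S$ is normal and Cohen--Macaulay, and $S$ is excellent hence analytically unramified, the only thing left to verify is the cohomological injectivity condition: for every proper birational $\pi\colon W\to\Spec S$ with $W$ normal, the canonical map $H_\n^d(S)\to H_E^d(W,\O_W)$ is injective, where $E=\pi^{-1}(\n)$. The plan is to reduce this to a statement we can feed into the vanishing conditions for maps of local cohomology, using the fact (from Lemma~\ref{lemma--image contains the plus closure}) that $0^+_{H_\n^d(S)}=0$ under our hypotheses.

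The main step I would carry out is to compare $\ker(H_\n^d(S)\to H_E^d(W,\O_W))$ with the plus closure $0^+_{H_\n^d(S)}$. Concretely: pick any element $\eta\in H_\n^d(S)$ that dies in $H_E^d(W,\O_W)$; I want to produce a module-finite domain extension $T$ of $S$ (equivalently, using the standard construction as in the proof of Lemma~\ref{lemma--image contains the plus closure}, a suitable surjection $R\twoheadrightarrow S$ from an equidimensional ring of strictly larger dimension) that kills $\eta$. The classical mechanism here is that a proper birational modification $W\to\Spec S$ can be dominated by a \emph{finite} cover after a normalization-in-a-function-field trick: if $\eta$ is represented by a \v Cech-type cocycle on a blowup, clearing denominators against an affine chart of $W$ and adjoining the needed fractions yields elements integral over $S$, hence a module-finite extension $T$ over which $\eta$ lies in the extended parameter ideal. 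This is precisely the content that makes parameter ideals being plus closed equivalent to the injectivity into $H_E^d$; one should be able to quote or adapt the argument of Lipman--Teissier \cite{LipmanTeissierPseudorationallocalringsandatheoremofBrianconSkoda} or Smith's circle of ideas relating $0^+_{H_\n^d(S)}$ to pseudo-rationality. Alternatively, and perhaps more cleanly, I would argue: by Lemma~\ref{lemma--image contains the plus closure} and the vanishing conditions, the composite $H_\m^d(R)\to H_\n^d(S)$ is zero for all the relevant $R$, so $0^+_{H_\n^d(S)}=0$; it then suffices to show $\ker(H_\n^d(S)\to H_E^d(W,\O_W))\subseteq 0^+_{H_\n^d(S)}$, which is a purely geometric statement not involving the vanishing hypothesis at all.

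So the real work is the inclusion $\ker(H_\n^d(S)\to H_E^d(W,\O_W))\subseteq 0^+_{H_\n^d(S)}$ for $S$ normal Cohen--Macaulay excellent. I would prove this by: (i) replacing $W$ by a normalization of $\Spec S$ in a module-finite extension if needed so that $W\to\Spec S$ factors through a finite map, or rather by using that $H_E^d(W,\O_W)$ maps to $H_\n^d(T)$ for $T=\Gamma(\text{a suitable affine model})$ integral over $S$; (ii) chasing the commutative square relating $H_\n^d(S)\to H_\n^d(T)$ with $H_\n^d(S)\to H_E^d(W,\O_W)$ and the map $H_E^d(W,\O_W)\to H_\n^d(T)$ (built from the Leray/edge maps for $W\to\Spec T$ and properness), concluding that anything killed in $H_E^d(W,\O_W)$ is already killed in some $H_\n^d(T)$. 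Then $\ker(H_\n^d(S)\to H_\n^d(T))\subseteq\ker(H_\n^d(S)\to H_\n^d(S^+))=0^+_{H_\n^d(S)}$, and we are done.

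The step I expect to be the main obstacle is the geometric comparison in (ii): producing a genuine map $H_E^d(W,\O_W)\to H_\n^d(T)$ with a finite $T$, compatibly with the maps from $H_\n^d(S)$, in the generality of excellent schemes with dualizing complexes. One has to be careful that $W$ need not be finite over $\Spec S$, so the comparison has to go through a cofinal system of finite modifications or through a direct \v Cech-cocycle argument on affine charts of $W$ (clearing denominators to land in $S^+$). If a clean reference is available — e.g.\ the equivalence in \cite{LipmanTeissierPseudorationallocalringsandatheoremofBrianconSkoda} between pseudo-rationality and the statement ``$S$ is normal, Cohen--Macaulay, analytically unramified, and every parameter ideal is contracted from every proper birational modification'' — then the lemma follows immediately by combining it with Corollary~\ref{corollary--VCLC implies s.o.p plus closed} and the vanishing of $0^+_{H_\n^d(S)}$, and I would take that route.
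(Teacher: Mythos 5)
There is a genuine gap, and it is in the step you yourself flag as ``the real work.'' You propose to reduce the injectivity of $H_\n^d(S)\to H_E^d(W,\O_W)$ to the vanishing of $0^+_{H_\n^d(S)}$ by proving the inclusion
\[
\ker\bigl(H_\n^d(S)\to H_E^d(W,\O_W)\bigr)\subseteq 0^+_{H_\n^d(S)}
\]
for $S$ normal, Cohen--Macaulay, excellent. This inclusion is false. In characteristic $0$, any normal domain satisfies $0^+_{H_\n^d(S)}=0$ (via the trace map, as the paper itself notes when discussing splinters in characteristic $0$), so the inclusion would assert that $H_\n^d(S)\to H_E^d(W,\O_W)$ is injective for \emph{every} normal Cohen--Macaulay excellent local domain --- i.e.\ that every such ring is pseudo-rational. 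That is false: the affine cone over a smooth plane curve of degree $\geq 4$ is normal and Cohen--Macaulay but not rational. The underlying issue is that ``killed by a finite cover'' and ``killed by a proper birational modification'' are genuinely different conditions, and the denominator-clearing mechanism you sketch does not produce elements integral over $S$: sections on affine charts of a blowup $\Proj S[Jt]$ involve arbitrary negative powers of elements of $J$ and are not in $S^+$. In characteristic $p>0$ the conclusion of your inclusion does hold, but only as a consequence of Smith's deep theorem that $F$-rational implies pseudo-rational (via tight closure Brian\c{c}on--Skoda), not by any elementary comparison between $W$ and finite covers; and that route would not be characteristic-free, which Lemma~\ref{lemma--VCLC implies pseudo-rational} needs to be.

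The paper avoids this by \emph{not} passing through $0^+$ at all. Instead it applies the vanishing conditions for maps of local cohomology directly to the surjection $R=S[Jt]=S\oplus Jt\oplus J^2t^2\oplus\cdots\twoheadrightarrow S$ from the Rees algebra, a $(d+1)$-dimensional domain to which Definition~\ref{definition--VCLC}(3) applies, so that $H^d_{\n+R_{>0}}(R)\to H^d_\n(S)$ is zero. It then feeds this into the Sancho de Salas exact sequence for $W=\Proj R\to\Spec S$, which compares $[H^d_{\n+R_{>0}}(R)]_0\to H^d_\n(S)$ with $H^d_\n(S)\to H^d_E(W,\O_W)$ and directly gives injectivity of the latter. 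The key point --- and precisely what your approach misses --- is that the strength of the VCLC hypothesis over the weaker conclusion ``parameter ideals are plus closed'' lies in its applicability to \emph{non-finite} graded rings like the Rees algebra; restricting to finite extensions discards exactly the information needed to control blowups.
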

\begin{proof}
By our general assumption on commutative rings, $S$ is an excellent local domain hence is analytically unramified. By Definition \ref{definition--VCLC} and Corollary \ref{corollary--VCLC implies s.o.p plus closed}, $S$ is Cohen-Macaulay and normal. To check the last condition of pseudo-rationality, we let $W\to \Spec S$ be a proper birational map with $W$ normal, and we can assume this map is projective and birational by Chow's Lemma. Therefore $W\to \Spec S$ is just the blow up of some ideal $J$ in $S$, i.e.,
$$W=\Proj S\oplus Jt\oplus J^2t^2\oplus \cdots:=\Proj R.$$ Now we apply the Sancho de Salas exact sequence (see page 202 of \cite{SanchodeSalasBlowingupmorphismswithCohenMacaulayassociatedgradedrings}, or take cohomology of (\ref{equation--exact triangle}) in Section 5) to $W=\Proj R\to \Spec S$ to get ($d=\dim S$):
\[ \xymatrix{
H_E^{d-1}(W,O_W) \ar[r] & [H_{\n+R_{>0}}^d(R)]_0 \ar[r] \ar@{^{(}->}[d]  & H_\n^d(S) \ar[r]\ar[d]^\cong & H_E^d(W,O_W) \ar[r] & [H_{\n+R_{>0}}^{d+1}(R)]_0  \\
{}& H_{\n+R_{>0}}^d(R) \ar[r]^-0 & H_\n^d(S)  \\
}\]
Since $R$ has dimension $d+1$ and $S$ satisfies the vanishing conditions for maps of local cohomology, the bottom map is the zero map. By the commutativity of the above diagram, we see the map $[H_{\n+R_{>0}}^d(R)]_0 \to H_\n^d(S)$ vanishes. Therefore $H_\n^d(S) \to H_E^d(W,O_W)$ is injective. This finishes the proof.
\end{proof}

\begin{proposition}
\label{proposition--VCT implies VCLC and pseudo-rational}
When $S$ is a complete local domain, $S$ satisfies the vanishing conditions for maps of Tor implies $S$ satisfies the vanishing conditions for maps of local cohomology. Hence both implies $S$ has only pseudo-rational singularities.
\end{proposition}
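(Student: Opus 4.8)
The statement to prove is that for $S$ a complete local domain, the vanishing conditions for maps of Tor (Definition \ref{definition--VCT}) imply the vanishing conditions for maps of local cohomology (Proposition/Definition \ref{definition--VCLC}); the pseudo-rationality conclusion is then immediate from Lemma \ref{lemma--VCLC implies pseudo-rational}. So the real content is to produce, out of a surjection $(R,\m)\twoheadrightarrow(S,\n)$ of the sort appearing in condition (4) of Proposition/Definition \ref{definition--VCLC}, a diagram $A\to R'\to S$ with $A$ regular and $A\to R'$ module-finite torsion-free, in such a way that the vanishing of $\Tor^A_i(M,R')\to\Tor^A_i(M,S)$ for suitable $M$ forces the vanishing of $H^d_\m(R)\to H^d_\n(S)$.

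\smallskip

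\emph{Step 1: reduce to a convenient form of the surjection.} Using condition (4) of Proposition/Definition \ref{definition--VCLC}, I would start with $(R,\m)\twoheadrightarrow(S,\n)$ where every minimal prime $P$ of $R$ has $\dim R/P>d$; since $S$ is complete I may assume $R$ complete as well, and then, after modding out by a minimal prime that the surjection factors through (the argument already used in the proof of $(3)\Rightarrow(4)$ in Proposition/Definition \ref{definition--VCLC}), I may take $R$ to be a complete local domain of dimension $\geq d+1$ surjecting onto $S$. By Cohen's structure theorem $R$ is module-finite and torsion-free over a complete regular local ring $A$ (a power series ring, with $A\hookrightarrow R$ a Noether normalization, which is automatically torsion-free since $R$ is a domain). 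Composing, I get $A\to R\to S$ of exactly the form required in Definition \ref{definition--VCT}, with $\dim A=\dim R\geq d+1$.

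\smallskip

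\emph{Step 2: translate $H^d_\m(R)\to H^d_\n(S)$ into a map of Tor's.} Let $n=\dim A=\dim R\geq d+1$. The key point is that top-dimensional local cohomology over a finite $A$-algebra can be computed via $A$: for a module-finite $A$-algebra $B$, local cohomology along $\m_B$ agrees with local cohomology along $\m_A$, and $H^i_{\m_A}(-)$ on $A$-modules is (up to the usual Matlis-dual / Čech description) governed by $\Tor^A$ against appropriate modules. More precisely, writing $A$ as a regular local ring of dimension $n$, one has a functorial isomorphism relating $H^j_{\m_A}(N)$ to $\Tor^A_{n-j}(E_A, N)$ (or to local cohomology of a Čech/Koszul complex), natural in the $A$-module $N$. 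Applying this with $N=R$ and $N=S$, the map $H^d_\m(R)\to H^d_\n(S)$ is identified with (a piece of) the map $\Tor^A_{n-d}(E_A,R)\to\Tor^A_{n-d}(E_A,S)$, where $n-d\geq 1$. Since $S$ satisfies the vanishing conditions for maps of Tor, this map vanishes; hence $H^d_\m(R)\to H^d_\n(S)$ vanishes. Together with the fact that applying the Tor-vanishing to $R=S$ directly forces $H^j_\n(S)=0$ for $j<d$ (so $S$ is Cohen-Macaulay), this verifies condition (4), and we are done.

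\smallskip

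\textbf{Main obstacle.} The delicate point is Step 2: matching up the map $H^d_\m(R)\to H^d_\n(S)$ with a genuine map of the form $\Tor^A_i(M,R)\to\Tor^A_i(M,S)$, naturally in the surjection, and getting the grading/indexing right so that the relevant $i$ is $\geq 1$ (which is where $\dim R>d$ is used). One must be careful that the isomorphism $H^j_{\m_A}(N)\cong \Tor^A_{n-j}(E_A,N)$ is functorial in $N$ over $A$ — this is where completeness of $S$ (hence of $A$ and $R$) and the fact that $A$ is regular (so $E_A$ has a nice description, e.g.\ as a limit of Koszul cohomologies of a regular system of parameters) are essential. Once this naturality is in hand the rest is formal, but setting it up cleanly — possibly by phrasing everything through a fixed Čech complex on a regular system of parameters of $A$, which simultaneously computes $H^\bullet_\m(R)$ and $H^\bullet_\n(S)$ and whose homology is visibly a $\Tor$ against the Koszul-type module $M=E_A$ — is the crux of the argument.
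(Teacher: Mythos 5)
Your proposal is correct and is essentially the paper's proof: reduce to a complete local domain $R$, invoke Cohen's structure theorem to get a module-finite torsion-free $A\hookrightarrow R$ with $A$ regular of dimension $n$, and identify $H^j_\m(R)\to H^j_\n(S)$ with $\Tor^A_{n-j}(E_A,R)\to\Tor^A_{n-j}(E_A,S)$, which vanishes for $j<n$ by the vanishing condition for Tor. The only thing worth noting is that what you flag as the ``main obstacle'' is not one: for a regular local ring $A$ of dimension $n$ with regular system of parameters $\underline{x}$, the \v{C}ech complex $C^\bullet(\underline{x},A)$ is a complex of flat $A$-modules whose only nonvanishing cohomology is $H^n_{\m_A}(A)\cong E_A$, so it \emph{is} a flat resolution of $E_A$, and tensoring it with any $A$-module $N$ simultaneously computes $\Tor^A_\bullet(E_A,N)$ and $H^\bullet_{\m_A}(N)$ naturally in $N$; the paper dispatches this in a single sentence. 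Minor cosmetic differences: you verify condition (4) of the proposition/definition (and therefore treat Cohen-Macaulayness as a separate check via $R=S$), whereas the paper verifies condition (2) directly, so the $R=S$ case is subsumed into the general argument rather than singled out.
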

\begin{proof}
Let $(R,\m)\twoheadrightarrow (S,\n)$ be a surjection with $R$ a domain. We may complete $R$ to get $\widehat{R}\twoheadrightarrow\widehat{S}=S$. Since $R$ is excellent by our general assumptions on commutative rings, $\widehat{R}$ is equidimensional. Since $S$ is a domain, the map $\widehat{R}\twoheadrightarrow S$ factors through $\widehat{R}\twoheadrightarrow R'\twoheadrightarrow S$ where $R'=\widehat{R}/P$ for $P$ a minimal prime of $\widehat{R}$. Thus in order to show $H_\m^j(R)\to H_\n^j(S)$ vanishes for $j<\dim R$, it suffices to show $H_\m^j(R')\to H_\n^j(S)$ vanishes for $j<\dim R'$. Hence without loss of generality, we may replace $R$ by $R'$ and assume $R$ is a complete local domain.

Now by Cohen's structure theorem, we have $(A,\m_0)\hookrightarrow (R,\m)$ a module-finite extension with $(A,\m_0)$ regular local. Let $E_A=E_A(A/\m_0)\cong H_{\m_0}^n(A)$ be the injective hull of the residue field of $A$. Since the \v{C}ech complex gives a flat resolution of $E_A$, we know that $\Tor_i^A(E_A, R)\cong H_\m^{n-i}(R)$ and $\Tor_i^A(E_A, S)\cong H_\m^{n-i}(S)$. Since $S$ satisfies the vanishing conditions for maps of Tor, by considering the map $A\to R\to S$, we have $\Tor_i^A(E_A, R)\to\Tor_i^A(E_A, S)$ vanishes for every $i\geq 1$. Hence $H_\m^j(R)\to H_\n^j(S)$ vanishes for $j<n=\dim R$. The last assertion then follows from Lemma \ref{lemma--VCLC implies pseudo-rational}.
\end{proof}

\begin{remark}
\label{remark--VCT implies VCLC for nice rings}
We assume $(S,\n)$ is complete in the proof of Proposition \ref{proposition--VCT implies VCLC and pseudo-rational} because we use Cohen's structure theorem to find $A\to R$ module-finite with $A$ regular. Hence the conclusion of Proposition \ref{proposition--VCT implies VCLC and pseudo-rational} still holds when we work with rings that are essentially of finite type over a field (we can use Noether normalization instead).
\end{remark}

\section{Vanishing conditions for maps of Tor and the splitting property}

The goal of this section is to prove $(1)\Leftrightarrow(3)$ in Theorem \ref{theorem--main theorem in equal characteristic}. As a corollary we will see that if $S$ satisfies the vanishing conditions for maps of Tor then $S$ is a splinter. We start with a lemma by restating (4.5) in \cite{HochsterHunekeApplicationsofbigCMalgebras}. This was stated only in the complete case in \cite{HochsterHunekeApplicationsofbigCMalgebras}, however the same argument works for rings essentially of finite type over a field (one needs to replace Cohen's structure theorem by Noether normalization in \cite{HochsterHunekeApplicationsofbigCMalgebras}).

\begin{lemma}[{\it cf.} (4.5) in \cite{HochsterHunekeApplicationsofbigCMalgebras}]
\label{lemma--reductions to check VCT}
Let $(S,\n)$ be either complete or essentially of finite type over a field. To show $(S,\n)$ satisfies the vanishing conditions for maps of Tor in a given characteristic, we may assume $A$ is local, $R$ is a domain, $A\to S$ is surjective and $M$ is finitely generated. Furthermore, it suffices to prove the vanishing of Tor for $i=1$.
\end{lemma}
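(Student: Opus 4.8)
The plan is to make a sequence of harmless reductions, peeling off one hypothesis at a time, following the structure of (4.5) in \cite{HochsterHunekeApplicationsofbigCMalgebras} but replacing Cohen's structure theorem by Noether normalization whenever $S$ is essentially of finite type over a field. First I would reduce to $A$ local: given an arbitrary factorization $A\to R\to S$ with $A$ regular, $A\to R$ module-finite torsion-free, the maximal ideal $\n$ of $S$ pulls back to a prime $\p$ of $A$, and one localizes $A$ at $\p$, $R$ at the corresponding multiplicative set; localization is flat and exact, so it commutes with $\Tor$, and the map $\Tor_i^A(M,R)\to\Tor_i^A(M,S)$ becomes the corresponding map over $A_\p$ after base change of $M$ — here one uses that $S$ is already local so $S\otimes_A A_\p = S$. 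Second, I would reduce to $R$ a domain: since $S$ is a domain, the composite $A\to R\to S$ kills some minimal prime $P$ of $R$, so it factors as $A\to R/P\to S$ with $A\to R/P$ still module-finite, and $R/P$ torsion-free over $A$ because $A$ is a domain mapping into the domain $R/P$ with the fraction field of $A$ injecting (the extension is generically finite, hence the kernel of $A\to R/P$ is zero); then $\Tor_i^A(M,R)\to\Tor_i^A(M,R/P)\to\Tor_i^A(M,S)$, and it suffices to kill the second arrow.

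Next I would reduce to $A\to S$ surjective. Given $A$ (now regular local) with a module-finite torsion-free $A\to R\to S$, use Cohen's structure theorem (in the complete case) or Noether normalization (in the finite-type case) to choose a regular local subring $A'\subseteq S$ over which $S$ is module-finite; enlarging, one arranges a regular local $A''$ mapping onto $S$ — the standard trick is to replace $A$ by a polynomial (or power series) extension $A[\underline x]$, still regular, adjoining variables that map onto a minimal generating set of $\n$, so that $A''\twoheadrightarrow S$. One must check that the new $A''\to R''\to S$ (with $R''=R\otimes_A A''$ or an appropriate module-finite torsion-free modification thereof) is still of the required type, and that the comparison map of $\Tor$'s over $A''$ controls the one over $A$ via base change along the flat map $A\to A''$: flat base change for $\Tor$ gives $\Tor_i^{A''}(M\otimes_A A'', -\otimes_A A'')\cong \Tor_i^A(M,-)\otimes_A A''$, and faithful flatness lets one detect vanishing. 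This bookkeeping — ensuring torsion-freeness survives and the maps stay compatible — is exactly the content of (4.5) of \cite{HochsterHunekeApplicationsofbigCMalgebras}, and it is the step I expect to be the main obstacle, since one has to be careful that $R\otimes_A A''$ need not be torsion-free and may need to be replaced (as in the construction $R=A+Q$ used in Lemma \ref{lemma--image contains the plus closure}).

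Finally I would reduce to $M$ finitely generated and to $i=1$. For finite generation: $M$ is a filtered colimit of its finitely generated submodules, $\Tor$ commutes with filtered colimits, and a map of colimits vanishes if each map at finite stage does — so it suffices to treat finitely generated $M$. For the reduction to $i=1$: given a finitely generated $M$, take a short exact sequence $0\to M'\to A^{\oplus m}\to M\to 0$; since $A^{\oplus m}$ is free, $\Tor_i^A(M,-)\cong \Tor_{i-1}^A(M',-)$ for $i\geq 2$ (dimension shifting), and $M'$ is again finitely generated, so by descending induction the vanishing for all $i\geq 1$ follows from the case $i=1$ applied to the successive syzygies — one checks the connecting isomorphisms are natural in the ring argument $R\to S$, so they identify the comparison maps at level $i$ with those at level $i-1$. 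Assembling these reductions proves the lemma.
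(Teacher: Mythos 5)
Your outline of the reductions is correct and matches what (4.5) of \cite{HochsterHunekeApplicationsofbigCMalgebras} does; the paper itself simply restates that result, noting only that Noether normalization replaces Cohen's structure theorem in the essentially-of-finite-type setting, exactly as you say. The localization step (using that $S$ is already local over the contraction of $\n$ so that $\Tor_i^A(M,S)$ is unchanged by localizing), the passage to $R/P$ for a minimal prime $P$ lying in the kernel of $R\to S$, the filtered-colimit reduction for $M$, and the dimension-shift reduction to $i=1$ are all fine.

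There is, however, a genuine misstep in the way you justify the reduction to $A\to S$ surjective. You invoke the base change $\Tor_i^{A''}(M\otimes_A A'', -\otimes_A A'')\cong\Tor_i^A(M,-)\otimes_A A''$ and propose to conclude by faithful flatness; but applying this with $-=S$ silently replaces the target by $S\otimes_A A''$, which is not $S$, so the resulting comparison map is no longer an instance of the vanishing condition for $S$ and the implication does not close up. The version of flat base change one actually needs keeps $S$ fixed: if $A\to A''$ is flat and $N$ is an $A''$-module, then a free resolution $F_\bullet\to M$ over $A$ base-changes to a free resolution of $M\otimes_A A''$ over $A''$, and $(F_\bullet\otimes_A A'')\otimes_{A''}N\cong F_\bullet\otimes_A N$, so $\Tor_i^{A''}(M\otimes_A A'',N)\cong\Tor_i^A(M,N)$ canonically. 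Taking $N=S$ and $N=R\otimes_A A''$, and observing that $\Tor_i^A(M,R)\to\Tor_i^A(M,S)$ factors through $\Tor_i^A(M,R)\otimes_A A''\cong\Tor_i^{A''}(M\otimes_A A'', R\otimes_A A'')$, one sees that vanishing of the $A''$-level comparison to $S$ (which is the surjective case applied to $A''\to R\otimes_A A''\to S$, after passing to a minimal prime to restore the domain hypothesis) does kill the original map. With that substitution the remaining bookkeeping — choosing a regular local $A''$ surjecting onto $S$ via Cohen or Noether normalization and checking that $R\otimes_A A''$ stays module-finite and torsion-free, which it does since $R$ embeds in a free $A$-module — is as you describe.
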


\begin{remark}
\label{remark--shape of R when A to S is surjective}
Suppose $A\to R\to S$ satisfies that $A\to R$ is module-finite and torsion-free, and the composite map $A\to S$ is {\it surjective}. In this situation, if we set $S=A/P=R/\widetilde{P}$, then modulo $\widetilde{P}$, elements of $R$ come from elements of $A$. Thus in this case $R=A+\widetilde{P}$.
\end{remark}

\begin{theorem}
\label{theorem--(1)=(3) in main theorem}
Let $(S,\n)$ be either complete or essentially of finite type over a field.
\begin{enumerate}
\item $S$ satisfies the vanishing conditions for maps of Tor.
\item For every regular local ring $A$ with $S=A/P$, and every module-finite torsion-free extension $A\to B$ with $Q\in \Spec B$ lying over $P$, $P\to Q$ splits as $A$-modules.
\item For every regular local ring $A$ with $S=A/P$, every module-finite torsion-free extension $A\to B$ that splits as $A$-modules, and every $Q\in \Spec B$ lying over $P$, the map $P\to Q$ splits as $A$-modules.
\end{enumerate}
Then we have $(2)\Rightarrow(1)\Rightarrow(3)$. In particular, $(1)\Leftrightarrow(2)\Leftrightarrow(3)$ in equal characteristic.
\end{theorem}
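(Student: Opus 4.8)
The plan is to prove the two substantive implications $(2)\Rightarrow(1)$ and $(1)\Rightarrow(3)$, and then deduce the equal-characteristic equivalence by closing the cycle: $(2)\Rightarrow(3)$ is a tautology (the hypothesis of $(3)$ is a special case of that of $(2)$), while $(3)\Rightarrow(2)$ holds in equal characteristic because regular local rings are splinters there, so every module-finite torsion-free extension $A\to B$ with $A$ regular local automatically splits as $A$-modules and the extra hypothesis in $(3)$ is vacuous. Hence $(2)\Rightarrow(1)\Rightarrow(3)\Rightarrow(2)$, giving $(1)\Leftrightarrow(2)\Leftrightarrow(3)$.

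Both genuine implications rest on one computation, which I would record first. Let $A$ be regular local, $S=A/P$, and $A\hookrightarrow R$ a module-finite torsion-free extension with $S=R/Q$, $Q=\ker(R\to S)$; then $P=A\cap Q$, so $R=A+Q$ by Remark \ref{remark--shape of R when A to S is surjective}, and the inclusions $\iota\colon P\hookrightarrow Q$ and $j\colon P\hookrightarrow A$ assemble into an evident map of short exact sequences from $0\to P\to A\to S\to 0$ to $0\to Q\to R\to S\to 0$. Applying $\Tor_\bullet^A(M,-)$ and using naturality of the connecting homomorphism, the connecting map $\delta_R\colon\Tor_1^A(M,S)\to M\otimes_A Q$ factors as $(1_M\otimes\iota)\circ\delta_A$, where $\delta_A$ identifies $\Tor_1^A(M,S)$ with $\ker(1_M\otimes j)=\ker(M\otimes_A P\to M)$. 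Since $\im(\Tor_1^A(M,R)\to\Tor_1^A(M,S))=\ker\delta_R$, the conclusion is: the map $\Tor_1^A(M,R)\to\Tor_1^A(M,S)$ vanishes exactly when $1_M\otimes\iota$ is injective on $\ker(M\otimes_A P\to M)$. (For $i\geq 2$ and general $M$ everything reduces to this case by Lemma \ref{lemma--reductions to check VCT}.)

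Now $(2)\Rightarrow(1)$: by Lemma \ref{lemma--reductions to check VCT} reduce to $M$ finitely generated, $i=1$, $A$ regular local, $A\to S$ surjective, $R$ a domain; by Remark \ref{remark--shape of R when A to S is surjective} this is the situation above with $B:=R$ a module-finite torsion-free extension of $A$ and $Q=\ker(R\to S)\in\Spec R$ lying over $P$. Hypothesis $(2)$ then says $\iota\colon P\to Q$ splits over $A$, so $1_M\otimes\iota$ is split injective on all of $M\otimes_A P$, hence certainly on the needed submodule; by the computation the Tor map vanishes. For $(1)\Rightarrow(3)$: given $A$ regular local with $S=A/P$, a module-finite torsion-free extension $A\hookrightarrow B$ with an $A$-linear retraction $\rho\colon B\to A$, and $Q\in\Spec B$ over $P$, set $R:=A+Q\subseteq B$ — again a module-finite torsion-free extension of $A$ with $R/Q=S$ — and apply $(1)$ to $A\to R\to S$: by the computation $1_M\otimes\iota$ is injective on $\ker(M\otimes_A P\to M)$ for every finitely generated $M$. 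To upgrade this to injectivity of $1_M\otimes\iota$ on all of $M\otimes_A P$, note that $G:=\rho|_Q\colon Q\to A$ restricts to the identity on $P$ (as $\rho$ fixes $A\supseteq P$), i.e. $G\circ\iota=j$; hence $(1_M\otimes G)\circ(1_M\otimes\iota)=1_M\otimes j$, forcing $\ker(1_M\otimes\iota)\subseteq\ker(M\otimes_A P\to M)$ and therefore $\ker(1_M\otimes\iota)=0$. So $\iota\colon P\hookrightarrow Q$ is a pure monomorphism of $A$-modules with finitely presented cokernel $Q/P$ (finitely presented since $A$ is Noetherian and $B$, hence $Q$, is $A$-finite), and a pure submodule with finitely presented cokernel is a direct summand; thus $\iota$ splits, which is $(3)$.

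I expect the crux to be precisely this upgrade step in $(1)\Rightarrow(3)$: vanishing of Tor by itself only controls $1_M\otimes\iota$ on the submodule $\ker(M\otimes_A P\to M)$, and without extra input one cannot conclude that $P\hookrightarrow Q$ is pure — indeed this is exactly why a general $S$ satisfying the weaker ``vanishing conditions'' need only be pseudo-rational rather than enjoy such splittings. The hypothesis in $(3)$ that $A\to B$ splits supplies the missing input: restricting its retraction to $Q$ produces an $A$-linear extension of $P\hookrightarrow A$ over $Q$, which collapses the gap between the two kernels. Beyond that, the only things to be careful about are the naturality bookkeeping that identifies $\delta_R$ with $(1_M\otimes\iota)\circ\delta_A$, and checking that the reductions of Lemma \ref{lemma--reductions to check VCT} together with the shape $R=A+Q$ of Remark \ref{remark--shape of R when A to S is surjective} genuinely produce data of the form required by $(2)$, namely a module-finite torsion-free extension of $A$ and a prime lying over $P$ — which they do.
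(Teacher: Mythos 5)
Your proof is correct and follows essentially the same path as the paper: both arguments form the intermediate ring $R_0=A+Q$, compare the short exact sequences $0\to P\to A\to S\to 0$ and $0\to Q\to R_0\to S\to 0$, invoke the reductions of Lemma \ref{lemma--reductions to check VCT} and Remark \ref{remark--shape of R when A to S is surjective}, and conclude the splitting from injectivity of $1_M\otimes\iota$ for all $M$ together with finite presentation of $Q/P$. The only difference is presentational: you track the factorization $\delta_R=(1_M\otimes\iota)\circ\delta_A$ of the connecting homomorphism directly and use the retraction restricted to $Q$, whereas the paper restricts the retraction to $R_0$ and packages the same diagram chase as the biconditional ``$\alpha\otimes\id_M$ injective $\Longleftrightarrow$ $\varphi_M=0$ and $\beta\otimes\id_M$ injective.''
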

\begin{proof}  Let $S=A/P$ with $A$ regular. Let $A\to B$ be a module-finite torsion-free extension with $Q\in \Spec B$ lying over $P$. We form the ring $R_0=A+Q\subseteq B$. Then $R_0$ is also a module-finite torsion-free extension of $A$ and we have $R_0/Q=A/P=S$. Now we look at the following commutative diagram:
\[  \xymatrix{
   0 \ar[r] & Q  \ar[r] &    R_0  \ar[r]  & S \ar[r] & 0\\
   0 \ar[r] & P  \ar[r]\ar[u]^\alpha &    A  \ar[u]^\beta \ar[r]  & S \ar[r]\ar[u]^\cong & 0
} .\]
Tensoring the above diagram with an arbitrary $A$-module $M$, we get:
\[  \xymatrix{
   \Tor_1^A(M, R_0) \ar[r]^{\varphi_M} & \Tor_1^A(M, S)\ar[r] & Q\otimes_AM  \ar[r] &    R_0\otimes_AM  \ar[r]  & S\otimes_AM \ar[r] & 0\\
   0 \ar[r] & \Tor_1^A(M, S) \ar[r]\ar[u]^\cong &  P\otimes_AM  \ar[r]\ar[u]^{\alpha\otimes \id_M} & A\otimes_AM  \ar[u]^{\beta\otimes \id_M}\ar[r] & S\otimes_AM \ar[r] \ar[u]^\cong & 0
} .\]
By a diagram chasing, one can see that
\begin{equation}
\label{equation--diagram chasing}
\alpha\otimes \id_M ~\mbox{is} ~\mbox{injective} \Longleftrightarrow \varphi_M=0 ~\mbox{and}~ \beta\otimes \id_M ~\mbox{is}~\mbox{injective}
\end{equation}

\vspace{0.5em}

$(1)\Rightarrow(3)$: Suppose $A\to B$ splits (i.e., we are in condition $(3)$), then $A\to R_0$ also splits, in particular $\beta\otimes \id_M$ is injective. Applying the vanishing conditions for maps of Tor to $A\to R_0\to S$, we know $\varphi_M=0$. Now (\ref{equation--diagram chasing}) implies $P\otimes_AM \xrightarrow{\alpha\otimes \id_M} Q\otimes_AM$ is injective for every $M$. But this implies $P\to Q$ splits by Corollary 5.2 in \cite{HochsterRobertsFrobeniusLocalCohomology} since $Q/P$ is a finitely generated $A$-module.

$(2)\Rightarrow(1)$: By Lemma \ref{lemma--reductions to check VCT}, we may assume $A\to R\to S$ satisfies $A\twoheadrightarrow S$ is surjective. Now we set $B=R$ and $Q=\ker(R\to S)$ in the above discussion. By Remark \ref{remark--shape of R when A to S is surjective}, we have $Q=\widetilde{P}$ and hence $R=A+\widetilde{P}=A+Q=R_0$ in this situation. Now (2) implies $P\to Q$ splits, in particular $\alpha\otimes \id_M$ is injective. Thus (\ref{equation--diagram chasing}) implies $$\varphi_M:  \Tor_1^A(M, R)=\Tor_1^A(M, R_0)\to \Tor_1^A(M, S)$$ vanishes for every $M$. This proves $S$ satisfies the vanishing conditions for maps of Tor, since it is enough to check the vanishing of Tor for $i=1$ by Lemma \ref{lemma--reductions to check VCT}.

Finally, in equal characteristic, every module-finite extension $A\to B$ splits when $A$ is regular. So $(2)\Leftrightarrow(3)$ and hence $(1)\Leftrightarrow(2)\Leftrightarrow(3)$.
\end{proof}

We next prove a lemma.

\begin{lemma}
\label{lemma--compatibly split}
Let $A\to B$ be a module-finite extension. Suppose $Q\in\Spec B$ lies over $P\in\Spec A$. If $P\to Q$ splits as $A$-modules and $\depth_PA\geq 2$, then $A\to B$ splits compatibly with $P\to Q$, i.e., there exists a splitting $\theta$: $B\to A$ such that $\theta(Q)=P$. In particular, $A/P\to B/Q$ splits as $A$-modules.
\end{lemma}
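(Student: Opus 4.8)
The plan is to leverage the hypothesis $\depth_P A \geq 2$ to upgrade an arbitrary $A$-module splitting of $P \to Q$ into one that is simultaneously a splitting of $A \to B$ sending $Q$ to $P$. First I would fix a splitting $\pi \colon Q \to P$ of the inclusion $\alpha \colon P \hookrightarrow Q$, so $\pi\alpha = \id_P$. Since $A \hookrightarrow B$ is module-finite and $B$ is torsion-free over $A$ (or at least: $A \to B$ is injective and we want a retraction), the cokernel $B/A$ and $Q/P$ fit into a commutative diagram of short exact sequences of $A$-modules
\[
\xymatrix{
0 \ar[r] & Q \ar[r] & B \ar[r] & B/Q \ar[r] & 0 \\
0 \ar[r] & P \ar[r]\ar[u]^\alpha & A \ar[r]\ar[u] & S \ar[r]\ar[u]^{\cong} & 0
}
\]
noting $B/Q \cong (A/P) = S$ because $Q$ lies over $P$ and $B/Q$ is a module-finite extension of $S$ that contains $S$; actually I should be careful and only use $B/Q \supseteq A/P = S$. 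The key point: extending the given map $\pi \colon Q \to P$ to a map $\theta \colon B \to A$ amounts to solving a lifting problem whose obstruction lives in $\Ext^1_A(B/Q, P)$, and I want to show this $\Ext$ vanishes or at least that the obstruction class dies.

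Here is where $\depth_P A \geq 2$ enters. The module $B/Q$ is supported (as far as the relevant primes go) on $V(Q) \cap \Spec A = V(P)$ in the sense that every associated prime of $B/Q$ over $A$ contains $P$; more precisely $B/Q$ is a finitely generated $S = A/P$-module. For any finitely generated $A/P$-module $N$ and any $A$-module $X$ with $\depth_P X \geq 2$, we have $\Ext^0_A(N,X) = \Ext^1_A(N,X) = 0$ — this is the standard fact that $\depth_I X \geq 2$ iff $\Hom_A(A/I, X) = \Ext^1_A(A/I, X) = 0$, extended from $A/P$ to arbitrary finitely generated $A/P$-modules by taking a presentation $A_{/P}^{\oplus a} \to A_{/P}^{\oplus b} \to N \to 0$ and chasing the long exact sequence (or, cleaner: $\Ext^i_A(N,X)$ has a two-term filtration built from $\Ext^i_A(A/P, X)$ via dévissage on the $A/P$-module structure of $N$). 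Taking $X = P$ — and checking $\depth_P P \geq 2$: from the exact sequence $0 \to P \to A \to S \to 0$ and $\depth_P A \geq 2$, $\depth_P S \geq 1$ would suffice but in fact $\depth_P S$ could be $0$; however the long exact sequence $\Ext^0_A(A/P, A) \to \Ext^0_A(A/P, S) \to \Ext^1_A(A/P, P) \to \Ext^1_A(A/P, A)$ with the two outer terms zero forces $\Ext^1_A(A/P,P) \cong \Ext^0_A(A/P,S)$, which need not vanish — so I must instead argue directly that $\Ext^1_A(B/Q, P)$ receives the obstruction to extending $\pi$, and that this particular class is zero because $\pi$ already extends over the submodule $Q + (\text{image of } A)$ ...

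Let me restructure the main step more carefully: consider the pushout of $0 \to Q \to B \to B/Q \to 0$ along $\pi\colon Q \to P$, giving $0 \to P \to B' \to B/Q \to 0$ together with a map $B \to B'$ extending $\pi$. A retraction $\theta\colon B \to A$ with $\theta(Q) = P$ exists if and only if this extension $B'$ of $B/Q$ by $P$ maps compatibly to the extension $0 \to P \to A \to S \to 0$, i.e. if the map $B/Q \to S$ (which is an isomorphism after we identify correctly, or at least: $A/P \hookrightarrow B/Q$ is what matters, and we only need $\theta$ on all of $B$) lifts; the obstruction is the difference of two classes in $\Ext^1_A(B/Q, P)$, but $B/Q$ as an $A$-module is $A/P$ (precisely when $A \to S$ is surjective, which after Remark on the shape of $R$ we may assume in the application), so the obstruction lies in $\Ext^1_A(A/P, P)$; and the two classes agree because both restrict correctly along $A/P = A/P$. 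I expect the cleanest route is: since $P \to Q$ splits, the map $Q \otimes_A M \to$ wait, no — use instead that $\depth_P A \geq 2$ guarantees $A \to B$ itself splits (by a Hom-into-$A$ argument: $\Ext^1_A(B/A, A)$ — hmm, $B/A$ need not be $A/P$-supported).

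Given the tangle above, \textbf{the main obstacle} — and the thing I would spend the most care on — is showing the obstruction class vanishes, which I believe reduces to the identity $\Ext^1_A(A/P, P) = \Ext^2_A(A/P, P)$... no: to the vanishing $\Hom_A(A/P, S) = 0$ being \emph{false} in general, which tells me the argument cannot go purely through $\Ext$ vanishing but must use the specific compatibility. Concretely I would: (i) pick any retraction $\rho\colon B \to A$ of $A \hookrightarrow B$ (exists in equal characteristic, or is part of the hypothesis in the mixed-characteristic statement — note in Lemma the extension $A \to B$ is not assumed split, so I must instead \emph{build} $\rho$); (ii) the composite $Q \xhookrightarrow{} B \xrightarrow{\rho} A$ lands in some submodule, and $\rho(Q)$ is an ideal of $A$ contained in $P$-primary-ish; (iii) correct $\rho$ by a homomorphism $B \to A$ factoring through $B/Q \to A$ to fix up $\rho(Q) = P$; the correction term lives in $\Hom_A(B/Q, A)$ and the amount of correction needed lives in $\Hom_A(Q, P/\rho(Q))$ modulo $\Hom_A(B,P)$-restrictions, and here $\depth_P A \geq 2$ forces the relevant comparison map $\Hom_A(B, A) \to \Hom_A(Q, A)$ to have image controlled so that $\pi$ (which exists by hypothesis) is hit. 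I would then conclude $A/P \to B/Q$ splits as the induced map on cokernels, and since $A/P \hookrightarrow B/Q$ and $B/Q \twoheadleftarrow$ ... yielding the final ``In particular'' clause. The honest summary: \emph{the heart is a depth-$2$ extension/lifting argument across the diagram of the two short exact sequences, and the one inequality doing all the work is $\depth_P A \geq 2$, used via $\Ext^{\leq 1}_A(-, P)$-vanishing against finitely generated $A/P$-modules after reducing $\Ext^1_A(A/P,P)$-torsion by the splitting of $P \to Q$.}
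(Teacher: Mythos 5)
Your instinct — use $\depth_P A \geq 2$ to kill an $\Ext^1$ obstruction to extending the splitting $\phi\colon Q\to P$ across $0\to Q\to B\to B/Q\to 0$ — is the right one, but you aim at the wrong coefficient module, and this is the genuine gap. You set up the obstruction in $\Ext^1_A(B/Q, P)$, then correctly observe that $\Ext^1_A(A/P, P)\cong\Hom_A(A/P,S)$ need not vanish (in fact it never does when $\depth_P A\geq 2$, since it is $A/P\neq 0$), and from there you circle through pushouts, corrections by $\Hom_A(B/Q,A)$, and an appeal to an $A\to B$ splitting that is not among the hypotheses, without landing anywhere. The missing move is to forget that the target is $P$: compose $\phi$ with $P\hookrightarrow A$ and view it as an element of $\Hom_A(Q,A)$. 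The obstruction to extending it to $\theta\in\Hom_A(B,A)$ then lies in $\Ext^1_A(B/Q,A)$, and \emph{this} group vanishes, because $B/Q$ is a finitely generated module killed by $P$ and $\depth_P A\geq 2$ (Eisenbud, Prop.~18.4). That is the only place depth 2 is used.

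Once you have $\theta\colon B\to A$ with $\theta|_Q=\phi$ you do not need to ``correct'' it at all; the paper's second small trick, which you never reach, is to check $\theta(1)=1$ automatically. Take any nonzerodivisor $r\in P$ (one exists since $\depth_P A\geq 1$); then $r\,\theta(1)=\theta(r)=\phi(r)=r$, forcing $\theta(1)=1$. So $\theta$ is a retraction of $A\to B$, and $\theta(Q)=\phi(Q)=P$ because $\phi$ restricts to the identity on $P\subseteq Q$ and has image in $P$. The compatible splitting and the ``in particular'' clause both fall out. So the overall shape of your argument is right, but you never find the two pivots (target $A$ not $P$; verify $\theta(1)=1$ a posteriori via a nonzerodivisor in $P$) that make the $\Ext$-vanishing actually applicable and sufficient.
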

\begin{proof}
Let $\phi$: $Q\to P$ be a splitting. The exact sequences $0\to Q\to B\to B/Q\to 0$ and $0\to P\to A$ induce a commutative diagram:
\[  \xymatrix{
   \Hom_A(B, A)  \ar[r] &    \Hom_A(Q, A) \ar[r] & \Ext_A^1(B/Q, A) \\
   {}   &   \Hom_A(Q, P)  \ar@{^{(}->}[u] 
} .\]

Since $B/Q$ is a finitely generated $A$-module annihilated by $P$ and $\depth_PA\geq 2$, we know that $\Ext_A^1(B/Q, A)=0$ by Proposition 18.4 in \cite{Eisenbud95}. Hence $\Hom_A(B, A)$ maps onto $\Hom_A(Q, A)$, in particular it maps onto the image of $\Hom_A(Q, P)$. Thus there is a map $\theta$: $B\to A$ such that $\theta|_Q=\phi$. We show that $\theta$ has to be a splitting from $B$ to $A$. Suppose $\theta(1)=a\in A$, for every nonzero element $r\in P$, we have \[ra=r\theta(1)=\theta(r)=\phi(r)=r.\]
So $a=1$ and hence $\theta$ is a splitting from $B$ to $A$ such that $\theta(Q)=P$, i.e., $\theta$ compatibly splits $P\to Q$. Finally $\overline{\theta}$ gives a splitting from $B/Q\to A/P$. This finishes the proof.
\end{proof}

\begin{corollary}
\label{corollary--VCT implies splinter}
If $S$ satisfies the vanishing conditions for maps of Tor, then $S$ is a splinter.
\end{corollary}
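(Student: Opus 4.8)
The plan is to show that $S \hookrightarrow T$ splits as a map of $S$-modules for every module-finite \emph{domain} extension $T$ of $S$; this is enough, since it makes every ideal of $S$ plus closed --- if $x \in IT \cap S$ and $\pi \colon T \to S$ splits $S \hookrightarrow T$, then $x = \pi(x) \in \pi(IT) \subseteq I$ --- and hence makes $S$ a splinter (a general module-finite extension can anyway be reduced to this case by factoring through $T/\mathfrak{q}$ for a minimal prime $\mathfrak{q}$ of $T$ contracting to $(0)$). So fix a module-finite domain extension $T \supseteq S$. First I would choose a presentation $S = A/P$ with $A$ regular local and $\depth_P A = \height P \geq 2$: by the standing assumptions $S = A_0/P_0$ for a regular local ring $A_0$, and replacing $A_0$ by $A_0[y_1,y_2]$ localized at $(\m_{A_0}, y_1, y_2)$ and $P_0$ by $(P_0, y_1, y_2)$ leaves $S = A/P$ unchanged while raising $\height P$ by $2$.

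Next I would reproduce the construction in the proof of Lemma \ref{lemma--image contains the plus closure}: writing $T = S[t_1,\dots,t_n]$ with each $t_i$ a root of a monic polynomial $f_i$ over $S$, lift the $f_i$ to monic polynomials $\tilde f_i$ over $A$ and set $B = A[x_1,\dots,x_n]/(\tilde f_1,\dots,\tilde f_n)$. Then $B$ is a finite free $A$-module, hence in particular a module-finite torsion-free extension of $A$ that splits as $A$-modules, and $x_i \mapsto t_i$ defines a surjection $B \twoheadrightarrow T$ whose kernel $Q$ is prime (as $T$ is a domain) and contracts to exactly $P$ in $A$ (a short diagram chase, using that $A/P = S \hookrightarrow T$ is injective). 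Since $S$ satisfies the vanishing conditions for maps of Tor, the implication $(1) \Rightarrow (3)$ of Theorem \ref{theorem--(1)=(3) in main theorem} applies to the data $(A,P,B,Q)$ --- and that implication is deduced purely from Definition \ref{definition--VCT} and a diagram chase, so the ``complete or essentially of finite type'' hypothesis of Theorem \ref{theorem--(1)=(3) in main theorem} is not needed for it --- giving that $P \to Q$ splits as $A$-modules.

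Finally, because $\depth_P A \geq 2$, Lemma \ref{lemma--compatibly split} upgrades this to a splitting $\theta \colon B \to A$ with $\theta(Q) = P$, so $A/P \to B/Q$, that is $S \to T$, splits as $A$-modules; and an $A$-linear map between $A/P$-modules is automatically $S$-linear, so $S \hookrightarrow T$ splits as $S$-modules, completing the argument. I expect no single step to be a real obstacle: the argument is an assembly of results already proved, with the genuine content carried by Theorem \ref{theorem--(1)=(3) in main theorem} (which converts vanishing of Tor into the splitting of $P \to Q$) and by Lemma \ref{lemma--compatibly split}. The only points needing care are arranging a presentation with $\depth_P A \geq 2$ without disturbing $S$, checking that $Q$ contracts to $P$ and not to something larger, and the passage from an $A$-linear to an $S$-linear splitting --- all routine.
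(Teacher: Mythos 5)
Your proposal is correct and follows essentially the same route as the paper: present $S = A/P$ with $\depth_P A \geq 2$ (by adjoining indeterminates), build $B = A[x_1,\dots,x_n]/(\tilde f_1,\dots,\tilde f_n)$ with a surjection onto $T$ having prime kernel $Q$ over $P$, invoke $(1)\Rightarrow(3)$ of Theorem~\ref{theorem--(1)=(3) in main theorem} to split $P \to Q$, and then apply Lemma~\ref{lemma--compatibly split} to split $S \to T$. The only additions over the paper's version are small clarifying remarks (the reduction to domain extensions via a minimal prime, the automatic $S$-linearity of the final splitting, and the observation that $(1)\Rightarrow(3)$ of Theorem~\ref{theorem--(1)=(3) in main theorem} does not use the complete/essentially-of-finite-type hypothesis), all of which are correct and routine.
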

\begin{proof}
We use a construction similar to the one used in the proof of Lemma \ref{lemma--image contains the plus closure}. We write $S=A/P$ such that $A$ is a regular local ring with $\depth_PA\geq 2$ (this can be achieved, for example, by adding indeterminants). Let $S\to T$ be a module-finite domain extension. Let $t_1,\dots,t_n$ be a set of generators of $T$ over $S=A/P$. Each $t_i$ satisfies a monic polynomial $f_i$ over $S$. We lift each $f_i$ to $A$ and form the ring $B=\frac{A[x_1,\dots,x_n]}{(f_1,\dots,f_n)}$. We have a natural surjection $B\twoheadrightarrow T$ with kernel $Q\in\Spec B$. It is straightforward to check that $Q$ lies over $P$.

Since $B$ is finite free over $A$, we know that $A\to B$ splits as a map of $A$-modules. Now applying $(1)\Rightarrow(3)$ of Theorem \ref{theorem--(1)=(3) in main theorem}, $P\to Q$ is split. Since $\depth_PA\geq 2$, by Lemma \ref{lemma--compatibly split}, $S=A/P \to B/Q=T$ splits as a map of $A$-modules (hence also as a map of $S$-modules). As this is true for any module-finite domain extension $T$ of $S$, $S$ is a splinter.
\end{proof}

\begin{remark}
Applying Corollary \ref{corollary--VCT implies splinter} with $S$ a regular local ring, we see that the vanishing conjecture for maps of Tor implies the direct summand conjecture in all characteristics. Although this is well known, we want to point out that the original proof given in \cite{HochsterHunekeApplicationsofbigCMalgebras} and \cite{Ranganathanthesis} depends on applying the vanishing conjecture to the map $A\to R\to S=R/\m$, i.e., studying the map from a mixed characteristic ring $(R,\m)$ to its residue field $S=R/\m$. Such a map, though being very natural, does {\it not} preserve the characteristic of the rings! Our above theorem gives a totally different proof, and it shows that the vanishing conjecture for maps of Tor, even if we restrict to $A\to R\to S$ all of the same characteristic, still implies the direct summand conjecture.
\end{remark}


\section{Main results}

In this section we prove our main theorem in equal characteristic. We begin by recalling some facts about dualizing complexes. For an integral scheme $X$, a {\it normalized dualizing complex} $\omega_X^\bullet$ is an object in $D^b_{\Coh}(X)$ which has finite injective dimension, such that the canonical map $O_X\to \mathbf{R}\sheafhom_{X}(\omega_X^\bullet, \omega_X^\bullet)$ is an isomorphism in $D^b_{\Coh}(X)$, and the first nonzero cohomology of $\omega_X^\bullet$ lies in degree $-\dim X$. Note that under this definition, if $\omega_X^\bullet$ is a normalized dualizing complex, then so is $\omega_X^\bullet\otimes\scr{L}$ for any line bundle $\scr{L}$ (in fact this is all the ambiguity for a connected scheme \cite{HartshorneResidues}).

To clear this ambiguity, notice that all our rings and schemes in this section are essentially of finite type over a field $k$ (or over a fixed scheme $\Spec S$ as in Theorem \ref{theorem--VCT implies derived splinters for Gorenstein rings} and Remark \ref{remark--new proof of splinter=derived splinter in char p}). Therefore we simply {\it define} $\omega_X^\bullet=\pi^{!}k$ (resp., $\pi^{!}\omega_S^\bullet$ for some chosen $\omega_S^\bullet$), where $\pi$: $X\to\Spec k$ (resp., $X\to \Spec S$) is the structural map and $\pi^{!}$ is the functor from Grothendieck duality theory \cite{HartshorneResidues}. By standard duality theory, $\omega_X^\bullet$ is a normalized dualizing complex discussed in the previous paragraph. Moreover, after this choice, we have $$\mathbf{R}\sheafhom _{X}(\mathbf{R}f_*O_Y, \omega_X^\bullet)\cong\mathbf{R}{f}_*\omega_Y^\bullet$$ for any proper and dominant morphism of integral schemes $f$: $Y\to X$, where $X$, $Y$ are both essentially of finite type over a field $k$ or over a fixed scheme $\Spec S$. We refer to \cite{HartshorneResidues} for details on Grothendieck duality theory and to Section 2.3 in \cite{BlickleSchwedeTuckerF-singularitiesvialterations} for a very nice summary.

Next we recall that for an excellent local domain $A$, a complex $$F_\bullet=0\to A^{b_n}\xrightarrow{\alpha_n} A^{b_{n-1}}\xrightarrow{\alpha_{n-1}}\cdots\xrightarrow{\alpha_2} A^{b_1}\xrightarrow{\alpha_1} A^{b_0}\to 0$$ of finitely generated free $A$-modules is said to {\it satisfy the standard conditions for rank and height (resp. rank and depth)} if, for every $1\leq i\leq n$, $\rank\alpha_i+\rank\alpha_{i+1}=b_i$ and the height (resp. the depth) of the ideal $I_{\rank{\alpha_i}}(\alpha_i)$ is at least $i$ where $I_t(\alpha_i)$ denotes the ideal generated by the size $t$ minors of a matrix for $\alpha_i$: it is independent of the choice of bases for $F_i$ and $F_{i-1}$ ($\rank\alpha_i$ is the largest integer $r$ such that $I_r(\alpha_i)\neq 0$). We use the convention that $I_0(\alpha_i)=R$ and the unit ideal has height infinity.

\begin{remark}
Let $F_\bullet$ be a complex of finite free $A$-modules. It is well known that $F_\bullet$ is acyclic (which means $F_\bullet$ is exact except possibly in degree $0$) if and only if $F_\bullet$ satisfies the standard conditions on rank and depth. Moreover, in characteristic $p>0$, $F_\bullet$ satisfies the standard conditions for rank and height if and only if $F_\bullet$ is {\it phantom acyclic}. We refer to \cite{HochsterHunekePhantomhomology} and \cite{Aberbachfinitephantomprojectivedimension} for details on phantom homology.
\end{remark}

Now we are ready to state and prove our key theorem that implies (and is in fact much stronger than) $(2)\Rightarrow(1)$ of Theorem \ref{theorem--main theorem in equal characteristic}.

\begin{theorem}[Key Theorem]\label{theorem--key theorem}
Let $(A,\m)$ be a local domain that is essentially of finite type over a field and $F_\bullet$: $0\to F_n\to \cdots\to F_1\to F_0\to 0$ be a complex of finitely generated free $A$-modules that satisfies the standard conditions on rank and height. Let $X\xrightarrow{f_0} Y\xrightarrow{f}\Spec A$ be maps of integral schemes such that $Y\to \Spec A$ is proper surjective and $X$ is a derived splinter. Then the natural map $$h^{-i}(F_\bullet\otimes\mathbf{R}f_*O_Y)\to h^{-i}(F_\bullet\otimes\mathbf{R}f_*\mathbf{R}{f_0}_*O_X)$$ induced by the pull back $f_0^*$ is the zero map for every $i>0$ (note that $F_i$ has cohomological degree $-i$).
\end{theorem}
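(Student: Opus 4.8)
The plan is to reduce first to the case where $X$ is regular, and then to transport the problem, via Grothendieck duality, to a statement about the trace map on dualizing complexes, which in characteristic $0$ is controlled by Grauert--Riemenschneider/Kollár vanishing. For the first reduction: since $X$ is excellent and essentially of finite type over a field, de Jong's theorem on alterations provides a proper surjective morphism $h\colon X'\to X$ with $X'$ regular. Because $X$ is a derived splinter, $O_X\to\mathbf{R}h_*O_{X'}$ is split in $D(\QCoh(X))$; pushing this splitting forward along $f_0$ and then $f$, and using the projection formula (valid since $F_\bullet$ is a bounded complex of finite free modules), the map
\[
F_\bullet\otimes\mathbf{R}f_*\mathbf{R}{f_0}_*O_X\longrightarrow F_\bullet\otimes\mathbf{R}f_*\mathbf{R}(f_0\circ h)_*O_{X'}
\]
is a split monomorphism in $D(A)$, compatibly with the maps from $F_\bullet\otimes\mathbf{R}f_*O_Y$; applying $h^{-i}$, it then suffices to prove the theorem with $X'$ in place of $X$. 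Since a regular scheme is a derived splinter, we may assume $X$ is regular.

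Next, set $g=f\circ f_0$, fix compatibly normalized dualizing complexes so that $\omega_X^\bullet=g^!\omega_A^\bullet$, $\omega_Y^\bullet=f^!\omega_A^\bullet$ and $\mathbf{R}\sheafhom_A(\mathbf{R}g_*O_X,\omega_A^\bullet)\cong\mathbf{R}g_*\omega_X^\bullet$ (similarly for $Y$), as recalled before the theorem, and complete $A$ at $\m$ (harmless, completion being faithfully flat and commuting with $\mathbf{R}f_*$, with $F_\bullet\otimes(-)$, and with $h^{-i}$). Applying Matlis duality together with local duality, the assertion becomes: for every $i>0$, the map
\[
H^i_\m\bigl(\mathbf{R}\sheafhom_A(F_\bullet,A)\otimes_A\mathbf{R}g_*\omega_X^\bullet\bigr)\longrightarrow H^i_\m\bigl(\mathbf{R}\sheafhom_A(F_\bullet,A)\otimes_A\mathbf{R}f_*\omega_Y^\bullet\bigr)
\]
induced by the trace map $\mathbf{R}{f_0}_*\omega_X^\bullet\to\omega_Y^\bullet$ is zero, where $\mathbf{R}\sheafhom_A(F_\bullet,A)$ is again a bounded complex of finite free $A$-modules.

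Now regularity of $X$ enters. In characteristic $0$, Grauert--Riemenschneider/Kollár vanishing shows $\mathbf{R}g_*\omega_X^\bullet$ has cohomology only in degrees $[-\dim X,-\dim\overline{g(X)}]$, with Cohen--Macaulay cohomology sheaves of the expected dimension. Tensoring such a Cohen--Macaulay-concentrated complex with $\mathbf{R}\sheafhom_A(F_\bullet,A)$ and applying $\mathbf{R}\Gamma_\m$, the standard conditions on rank and height satisfied by $F_\bullet$ feed, via the Buchsbaum--Eisenbud acyclicity criterion over Cohen--Macaulay modules (equivalently a Peskine--Szpiro depth count), into the conclusion that for $i>0$ the source group is carried into the part of the target annihilated by the trace---the trace factoring through the lowest cohomology sheaf of $\mathbf{R}g_*\omega_X^\bullet$, on which the depth estimate is sharp. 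In characteristic $p>0$, where Grauert--Riemenschneider fails, the first reduction is still available, but one prefers instead to keep $X$ a splinter (derived splinters equal splinters, by Bhatt) and to replace the role of $\mathbf{R}g_*\omega_X^\bullet$ by a balanced big Cohen--Macaulay algebra $B$: the standard conditions on rank and height are precisely the statement that $F_\bullet\otimes_A B$ is acyclic (phantom acyclicity, Hochster--Huneke), and one runs the parallel argument with such a $B$, obtained weakly functorially along $X\to Y\to\Spec A$ (e.g.\ from absolute integral closures); this also recovers Theorem~\ref{theorem--vanishing theorem for maps of Tor} and its strengthenings.

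The hard part is this last step. Grauert--Riemenschneider controls $\mathbf{R}g_*\omega_X^\bullet$ rather than $\mathbf{R}g_*O_X$ directly, so the duality bookkeeping must be organized carefully, and one must use the \emph{height} version (not the stronger \emph{depth} version) of the standard conditions exactly---this is why the regularity of $X$, and the geometry, cannot be traded for a purely homological argument over $A$. A second difficulty is arranging the characteristic $p$ argument to run in genuine parallel: with no resolution available the derived-splinter hypothesis must be routed through the big Cohen--Macaulay machinery rather than through vanishing theorems, and the weak functoriality of big Cohen--Macaulay algebras must be checked to be compatible with the morphisms $X\to Y\to\Spec A$.
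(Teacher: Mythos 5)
Your opening reduction---altering $X$ to a regular $X'$ by de Jong and peeling off the derived-splinter splitting of $O_X\to\mathbf{R}h_*O_{X'}$---is a legitimate variant of the paper's move; the paper instead resolves $Y$, forms $W=X\times_YZ$, and uses the splitting of $O_X\to\mathbf{R}q_*O_W$, but both reductions are aiming at the same vanishing statement: $h^{-i}(F_\bullet\otimes\mathbf{R}g_*O_Z)=0$ for all $i>0$ when $Z$ is regular and $Z\to\Spec A$ is proper surjective. The genuine gap is in how you propose to prove that vanishing. Local duality identifies $\mathbf{R}\Hom_A(F_\bullet,A)\otimes\mathbf{R}g_*\omega_Z^\bullet$ with the Matlis dual of $\mathbf{R}\Gamma_\m\bigl(F_\bullet\otimes\mathbf{R}g_*O_Z\bigr)$, \emph{not} of $F_\bullet\otimes\mathbf{R}g_*O_Z$ itself (and the extra $H^i_\m$ wrapped around your displayed dualized objects is misplaced). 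So Koll\'ar/Grauert--Riemenschneider vanishing, fed through duality, only gives $h^{-i}\bigl(\mathbf{R}\Gamma_\m(F_\bullet\otimes\mathbf{R}g_*O_Z)\bigr)=0$ for $i>0$. To promote this to the modules you actually need, you must first know that $h^{-i}(F_\bullet\otimes\mathbf{R}g_*O_Z)$ is $\m$-torsion for $i>0$; that is precisely the hard step the paper performs by induction on $\dim A$, observing that for any prime $P\subsetneq\m$ the localization $(F_\bullet)_P$ still satisfies the standard conditions on rank and height over $A_P$ and $Z\times_{\Spec A}\Spec A_P$ is still regular, and running a \v{C}ech double-complex spectral sequence to conclude the support is $\{\m\}$. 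Your proposal omits this entirely, and the unsubstantiated claim that the cohomology sheaves of $\mathbf{R}g_*\omega_Z^\bullet$ are ``Cohen--Macaulay of the expected dimension'' is neither needed nor generally true; what is actually used is just the degree concentration $[-\dim Z,-d]$ together with the Buchsbaum--Eisenbud fact that $F_\bullet$ can be split-truncated to length $\le d$.

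A second gap is in your characteristic $p$ sketch. You propose to ``run the parallel argument'' with a weakly functorial big Cohen--Macaulay algebra, but $Y\to\Spec A$ is proper, not affine, so there is no module over which to apply the phantom-acyclicity/generalized Buchsbaum--Eisenbud criterion directly. The paper's key extra ingredient here is Bhatt's theorem producing a finite cover $\pi\colon Z\to Y$ with $\tau_{\ge1}\pi^*=0$, which forces the map $\mathbf{R}f_*O_Y\to\mathbf{R}f_*\pi_*O_Z$ to factor through the honest finite $A$-algebra $g_*O_Z$; only after this collapse to an affine situation do $(g_*O_Z)^+$ and the generalized Buchsbaum--Eisenbud criterion enter, and the splinter property of $X$ is then used to pull the resulting cover back along $f_0$. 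Without that collapse the argument you outline does not start.
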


\begin{proof}As the methods we use in characteristic $0$ and $p>0$ are very different, we separate the proof in two parts.

\vspace{0.5em}

\noindent{\it\textbf{Proof in characteristic $0$}}: We assume $(A,\m)$ is of equal characteristic $0$. Let $p$: $Z\to Y$ be a resolution of singularities and let $W=X\times_YZ$. We have the following diagram:
\[  \xymatrix{
   {} & Z \ar[d]^p\ar[ld]^g & W  \ar[l]^{f_1}\ar[d]^q\\
  \Spec A & Y \ar[l]^-f  & X  \ar[l]^{f_0}
} .\]

Since $p$ is a resolution of singularities, the map $q$ obtained by base change is proper and surjective. Because $X$ is a derived splinter, the natural map $q^*$: $O_X\to \mathbf{R}q_*O_W$ has a splitting $\eta$ in the derived category $D(\Coh(X))$, i.e., $\eta\circ q^*=\id$. Therefore we have the following commutative diagram in $D(\QCoh(\Spec A))$:
\[  \xymatrix{
    \mathbf{R}g_*O_Z \ar[r]^-{f_1^*} & \mathbf{R}g_*\mathbf{R}{f_1}_*O_W=\mathbf{R}f_*\mathbf{R}{f_0}_*\mathbf{R}q_*O_W \ar@/^1pc/@{.>}[d]^\eta\\
    \mathbf{R}f_*O_Y \ar[u]^{p^*} \ar[r]^-{f_0^*}& \mathbf{R}f_*\mathbf{R}{f_0}_*O_X  \ar[u]^{q^*}
} .\]

Now we tensor the above diagram with $F_\bullet$ in $D(\QCoh(\Spec A))$ and take cohomology in negative degree, we get a commutative diagram (since $F_\bullet$ is a complex of free $A$-modules, $\otimes^\mathbf{L}$ is the same as $\otimes$):

\[  \xymatrix{
    h^{-i}(F_\bullet\otimes \mathbf{R}g_*O_Z) \ar[r]^-{f_1^*} & h^{-i}(F_\bullet\otimes\mathbf{R}f_*\mathbf{R}{f_0}_*\mathbf{R}q_*O_W) \ar@/^1pc/@{.>}[d]^\eta\\
    h^{-i}(F_\bullet\otimes\mathbf{R}f_*O_Y) \ar[u]^{p^*} \ar[r]^-{f_0^*}& h^{-i}(F_\bullet\otimes\mathbf{R}f_*\mathbf{R}{f_0}_*O_X)  \ar[u]^{q^*}
} .\]
Since $\eta\circ q^*=\id$, in order to show $f_0^*$ induces the zero map, it is enough to show $\eta\circ f_1^*\circ p^*$ induces the zero map by the above commutative diagram. We will show this by proving that $h^{-i}(F_\bullet\otimes\mathbf{R}g_*O_Z)=0$ for every $i>0$, when $F_\bullet$ satisfies the standard conditions for rank and height and $Z\to \Spec A$ is proper surjective with $Z$ smooth.

We use induction on the dimension of $A$. When $\dim A=0$, $(A,\m)$ is Artinian and it is easy to see that every complex $F_\bullet$ that satisfies the standard conditions on rank and height is split exact except at the zeroth spot. Hence the complex $F_\bullet\otimes\mathbf{R}g_*O_Z\cong A^{n}\otimes\mathbf{R}g_*O_Z=\oplus^n \mathbf{R}g_*O_Z$ has no negative degree part, so $h^{-i}(F_\bullet\otimes\mathbf{R}g_*O_Z)=0$ for every $i>0$.

Now let $\dim A=d$. We set $G^\bullet=F_\bullet\otimes\mathbf{R}g_*O_Z$. Let $\underline{x}=x_1,\dots, x_d$ denote a full system of parameters of $A$ and let $C^\bullet(\underline{x}, A)$ be the \v{C}ech complex associated to $\underline{x}$. We also let $$\widetilde{G}^\bullet=G^\bullet\otimes^\mathbf{L}C^\bullet(\underline{x}, A)=G^\bullet\otimes C^\bullet(\underline{x}, A).$$
We compute $h^{-i}(\widetilde{G}^\bullet)$ using spectral sequences of the double complex:

\[  \xymatrix{
    {} & {} & {} & {}\\
    \cdots \ar[r] & G^{p+1}\otimes C^{q}(\underline{x}, A) \ar[r] \ar[u] &G^{p+1}\otimes C^{q+1}(\underline{x}, A)\ar[r]\ar[u]& \cdots\\
    \cdots \ar[r] & G^{p}\otimes C^{q}(\underline{x}, A) \ar[r]\ar[u] &G^{p}\otimes C^{q+1}(\underline{x}, A) \ar[r]\ar[u] & \cdots\\
    {} & {} \ar[u] &{} \ar[u] &
} \]
Each $C^{q}(\underline{x}, A)$ is a direct sum of localizations of $A$, in particular it is flat over $A$. So when we take the cohomology of the columns, we get $$E_1^{pq}=h^{p}(G^\bullet)\otimes C^{q}(\underline{x}, A).$$ Note that, when $q=0$, this is just $h^{p}(G^\bullet)$ while when $q>0$, this is a direct sum of proper localizations of $h^{p}(G^\bullet)$. However, when $p<0$, $h^{p}(G^\bullet)=h^{p}(F_\bullet\otimes\mathbf{R}g_*O_Z)$ is supported only at the maximal ideal $\m$ by the induction hypothesis. Because if it is supported at another prime ideal, say $P$, then $h^{p}((F_\bullet)_P\otimes\mathbf{R}g_*O_{Z\times_{\Spec R} \Spec R_P})\neq 0$. But $(F_\bullet)_P$ satisfies the standard conditions for rank and height as a free complex over $R_P$ (the ranks of all the $F_i$ are preserved and the height of an ideal does not decrease when we localize) and ${Z\times_{\Spec R}\Spec R_P}$ is smooth, we thus got a contradiction since $\dim R_P<d$. Hence we know that $E_1^{pq}=0$ when $p<0$ and $q>0$. In sum, the $E_1$-stage of the spectral sequence looks like:
\[  \xymatrix{
   \cdots\\
    E_1^{00}=h^0(G^\bullet)\ar[r] & E_1^{01} \ar[r] & E_1^{02} \ar[r]& \cdots \ar[r]& E_1^{0d} \\
    E_1^{-1, 0}=h^{-1}(G^\bullet) \ar[r]& E_1^{-1, 1}=0 \ar[r]  &E_1^{-1, 2}=0  \ar[r] & \cdots \ar[r] & E_1^{-1,d}=0\\
    E_1^{-2, 0}=h^{-2}(G^\bullet) \ar[r] & E_1^{-2, 1}=0 \ar[r]  &E_2^{-2, 2}=0  \ar[r] & \cdots \ar[r] &E_1^{-2, d}=0\\
    \cdots
} \]
From this we know that
\begin{equation}
\label{equation--degeneration of spectral sequence}
h^{-i}(\widetilde{G}^\bullet)=E_1^{-i, 0}=h^{-i}(G^\bullet)
\end{equation}
for every $i>0$. This is because $E_1^{-i, 0}=h^{-i}(G^\bullet)$ is the only nontrivial term that contributes to $h^{-i}(\widetilde{G}^\bullet)$ when $i>0$, and all the further differentials at this spot: $$E_r^{-i+r-1, -r}\to E_{r}^{-i, 0}\to E_r^{-i-r+1, r},$$ vanish since $E_r^{-i+r-1, -r}=E_r^{-i-r+1, r}=0$ when $i>0$ and $r\geq 1$.

Rewriting (\ref{equation--degeneration of spectral sequence}), we have:
\begin{equation}
h^{-i}(F_\bullet\otimes\mathbf{R}g_*O_Z)=h^{-i}(F_\bullet\otimes\mathbf{R}g_*O_Z\otimes C^\bullet(\underline{x}, A)).
\end{equation}
Since we have functorial isomorphism $\mathbf{R}\Gamma_\m(-)\xrightarrow{\cong}C^\bullet(\underline{x}, A)\otimes-$ in $D(\QCoh(\Spec A))$ by Proposition 3.1.2 in \cite{LipmanLecturesonlocalcohomologyandduality}. The above yields:
\begin{equation}
\label{equation--equation from the sqectral sequence argument}
h^{-i}(F_\bullet\otimes\mathbf{R}g_*O_Z)\cong h^{-i}(F_\bullet\otimes\mathbf{R}\Gamma_\m\mathbf{R}g_*O_Z).
\end{equation}
By local duality, we have that
\begin{equation}
\label{equation--local and Grothendieck duality}
h^j(\mathbf{R}\Gamma_\m\mathbf{R}g_*O_Z)=h^{-j}(\mathbf{R}\Hom(\mathbf{R}g_*O_Z, \omega_A^\bullet))^\vee=h^{-j}(\mathbf{R}g_*\omega_Z^\bullet)^\vee
\end{equation}
where $\omega_A^\bullet$, $\omega_Z^\bullet$ are the normalized dualizing complexes of $\Spec A$ and $Z$. Since $Z$ is smooth, $\omega_Z^\bullet\cong\omega_Z[n]$ where $n=\dim Z$. Hence $h^{-j}(\mathbf{R}g_*\omega_Z^\bullet)=h^{n-j}(\mathbf{R}g_*\omega_Z)=0$
when $n-j>n-d$ (equivalently, $j<d$) by Theorem 2.1 in \cite{KollarHigherdirectimagesofdualizingsheaves1}.\footnote{In \cite{KollarHigherdirectimagesofdualizingsheaves1}, the main theorem requires that the schemes are projective, but this is not essential. One can refer to Corollary 6.11 in \cite{EsnaultViehwegLecturesOnVanishing}.} Now from (\ref{equation--local and Grothendieck duality}), we know that
\begin{equation}
\label{equation--vanishing in j<d}
h^j(\mathbf{R}\Gamma_\m\mathbf{R}g_*O_Z)=0, \forall j<d.
\end{equation}

On the other hand, we know that $F_\bullet$ satisfies the standard conditions for rank and height. This implies $I_{\rank\alpha_n}(\alpha_n)$ must be the unit ideal when $n>d$ because there are no proper ideals in $R$ with height strictly bigger than the dimension of $R$. From this it follows that $F_\bullet$ is split exact at cohomological degree $-n$ when $n>d$ by Lemma 1 in \cite{BuchsbaumEisenbudWhatmakesacomplexexact}. Therefore, in $D(\Coh(\Spec A))$ or $D(\QCoh(\Spec A))$, $F_\bullet$ is quasi-isomorphic to a free complex $$H_\bullet: 0\to H_k\to H_{k-1}\to\cdots\to H_1\to H_0\to 0$$ with $k\leq d$ and $H_j$ has cohomological degree $-j$. Now from (\ref{equation--vanishing in j<d}), it is straightforward to check that $$h^{-i}(F_\bullet\otimes\mathbf{R}\Gamma_\m\mathbf{R}g_*O_Z)=h^{-i}(H_\bullet\otimes\mathbf{R}\Gamma_\m\mathbf{R}g_*O_Z)=0$$ for every $i>0$. Hence by (\ref{equation--equation from the sqectral sequence argument}) we know that $h^{-i}(F_\bullet\otimes\mathbf{R}g_*O_Z)=0$ for every $i>0$. This finishes our proof in equal characteristic $0$.

\vspace{1em}

\noindent{\it\textbf{Proof in characteristic $p>0$}}: Now we assume $(A,\m)$ is of equal characteristic $p>0$. By Theorem 1.5 in \cite{BhattDerivedsplintersinpositivecharacteristic}, there exists a finite surjective morphism $\pi$: $Z\to Y$ such that the pullback $\pi^*_{\geq 1}$: $\tau_{\geq 1}\mathbf{R}f_*O_Y\to\tau_{\geq 1}\mathbf{R}f_*\pi_*O_Z$ is the zero map. From this it follows (see the proof of Theorem 1.4 in \cite{BhattDerivedsplintersinpositivecharacteristic} or Lemma 5.1 in \cite{BlickleSchwedeTuckerF-singularitiesvialterations}) that the natural map $\mathbf{R}f_*O_Y\to \mathbf{R}f_*\pi_*O_Z$ factors through
\begin{equation}
\label{equation--factors through affine case}
\mathbf{R}f_*O_Y\xrightarrow{\theta} (f\circ\pi)_*O_Z\xrightarrow{\iota} \mathbf{R}f_*\pi_*O_Z.
\end{equation}

Let $g=f\circ\pi$. We know that $g_*O_Z$ is a module-finite extension of $A$, as $Z\to \Spec A$ is proper. Let $W=Z\times_YX$. We have the following commutative diagram:
\[  \xymatrix{
   \Spec(g_*O_Z)\ar[d] & Z \ar[l]\ar[d]^\pi \ar[ld]^g & W  \ar[l]^{f_1}\ar[d]^{}\\
  \Spec A & Y \ar[l]^-f  & X  \ar[l]^{f_0}
} .\]
This together with (\ref{equation--factors through affine case}) tell us that there is a commutative diagram in $D(\QCoh(\Spec A))$:
\begin{equation}
\label{equation--commutative diagram in derived category}
\xymatrix{
    g_*O_Z\ar[r]^-\iota &\mathbf{R}g_*O_Z \ar[r]^-{f_1^*} & \mathbf{R}f_*\mathbf{R}{f_0}_*\pi_*O_W \\ 
    {} & \mathbf{R}f_*O_Y \ar[lu]^\theta \ar[u]^{\pi^*} \ar[r]^-{f_0^*}& \mathbf{R}f_*\mathbf{R}{f_0}_*O_X  \ar[u]
} .
\end{equation}

Now we pick an arbitrary element $x\in h^{-i}(F_\bullet\otimes\mathbf{R}f_*O_Y)$ for an arbitrary $i>0$. We want to show that $x$ maps to $0$ in $h^{-i}(F_\bullet\otimes\mathbf{R}f_*\mathbf{R}{f_0}_*O_X)$. To prove this, we first look at the image of $x$ under the map $$h^{-i}(F_\bullet\otimes\mathbf{R}f_*O_Y)\xrightarrow{\theta} h^{-i}(F_\bullet\otimes g_*O_Z).$$ Let $y=\theta(x)$. Since we are in equal characteristic $p>0$ and $A\to g_*O_Z$ is a module-finite extension, $A^+=(g_*O_Z)^+$ is a balanced big Cohen-Macaulay algebra over $A$ \cite{HochsterHunekeInfiniteintegralextensionsandbigCMalgebras}. Since $F_\bullet$ satisfies the standard conditions for rank and height and $(g_*O_Z)^+$ is big Cohen-Macaulay, it follows from the generalized Buchsbaum-Eisenbud criterion (see Theorem 1.2.3 in \cite{Aberbachfinitephantomprojectivedimension}) that $h^{-i}(F_\bullet\otimes (g_*O_Z)^+)=0$ for $i>0$. In particular, we know that there exists a module-finite extension $B$ of $g_*O_Z$ such that the image of $y$ in $h^{-i}(F_\bullet\otimes B)$ is $0$. Let $W'=W\times_{\Spec (g_*O_Z)}\Spec B$. We know $\pi'$: $W'\to W\to X$ is a finite surjective map of schemes. Since $X$ is a derived splinter, in particular a splinter, we know that $O_X\to \pi'_*O_{W'}$ has a splitting $\eta$. In sum, after tensoring (\ref{equation--commutative diagram in derived category}) with $F_\bullet$ in $D(\QCoh(\Spec A))$ and taking cohomology, we get a commutative diagram:

\[  \xymatrix{
    h^{-i}(F_\bullet\otimes B) \ar[r] & h^{-i}(F_\bullet\otimes\mathbf{R}f_*\mathbf{R}{f_0}_*\pi'_*O_{W'}) \ar@/^7pc/@{.>}[dd]^\eta \\
    h^{-i}(F_\bullet\otimes g_*O_Z) \ar[u] \ar[r]^-{f_1^*\circ\iota} & h^{-i}(F_\bullet\otimes\mathbf{R}f_*\mathbf{R}{f_0}_*\pi_*O_W)\ar[u]\\
    h^{-i}(F_\bullet\otimes\mathbf{R}f_*O_Y) \ar[u]^{\theta} \ar[r]^-{f_0^*}& h^{-i}(F_\bullet\otimes\mathbf{R}f_*\mathbf{R}{f_0}_*O_X)  \ar[u]
} .\]

From this diagram, it is easy to see that the image of $x\in h^{-i}(F_\bullet\otimes\mathbf{R}f_*O_Y)$ maps to $0$ under $f_0^*$, because by our construction, the image of $x$ in $h^{-i}(F_\bullet\otimes B)$ is $0$. Since our choice of $x$ and $i>0$ are arbitrary, this proves that the map $f_0^*$: $h^{-i}(F_\bullet\otimes\mathbf{R}f_*O_Y)\to h^{-i}(F_\bullet\otimes\mathbf{R}f_*\mathbf{R}{f_0}_*O_X)$ is the zero map for every $i>0$. This finishes our proof in equal characteristic $p>0$.
\end{proof}

\begin{remark}
\label{remark--splinters and derived splinters are same in char p}
Note that in the above proof, in equal characteristic $p>0$, we only need to assume $X$ is a splinter. But splinters and derived splinters are the same in characteristic $p>0$ \cite{BhattDerivedsplintersinpositivecharacteristic}. In fact, in the course of our proof we use Theorem 1.5 of \cite{BhattDerivedsplintersinpositivecharacteristic}, which is a key ingredient in proving splinters and derived splinters are equivalent in characteristic $p>0$. We refer to \cite{BhattDerivedsplintersinpositivecharacteristic} for details.
\end{remark}

In the case of maps between rings instead of schemes, our Key Theorem \ref{theorem--key theorem} specializes to the following corollary:

\begin{corollary}
\label{corollary--ring version of key theorem}
Let $(A,\m)$ be a local domain that is essentially of finite type over a field and let $M$ be a finitely generated $A$-module of finite projective dimension. Let $A\to R\to S$ be ring homomorphisms such that $A\to R$ is a module-finite domain extension and $S$ is a derived splinter. Then the natural map:
$$\Tor_i^A(M, R)\to \Tor^A_i(M, S)$$
is the zero map for every $i>0$.
\end{corollary}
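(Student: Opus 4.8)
The plan is to deduce this from the Key Theorem~\ref{theorem--key theorem} by passing to the affine setting. Since $M$ has finite projective dimension over $A$, fix a finite free resolution $F_\bullet\colon 0\to F_n\to\cdots\to F_1\to F_0\to 0$ of $M$ by finitely generated free $A$-modules, so that $\Tor_i^A(M,N)=h^{-i}(F_\bullet\otimes_A N)$ for every $A$-module $N$ (and $\otimes=\otimes^{\mathbf{L}}$ since $F_\bullet$ is free). Put $Y=\Spec R$, $X=\Spec S$, and let $f\colon Y\to\Spec A$ and $f_0\colon X\to Y$ be the morphisms of schemes induced by $A\to R$ and $R\to S$. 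As $R$ and $S$ are domains, $X$, $Y$, and $\Spec A$ are integral schemes; as $A\to R$ is module-finite it is in particular proper, and since $A\hookrightarrow R$ is injective (it is an extension) $f$ is dominant, hence surjective; and $X=\Spec S$ is a derived splinter by hypothesis. Note that no hypothesis is needed on $f_0$, i.e.\ on the ring map $R\to S$.

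It remains to check that $F_\bullet$ satisfies the standard conditions on rank and height, so that Theorem~\ref{theorem--key theorem} applies. But $F_\bullet$, being a free resolution, is acyclic (exact except in degree $0$), hence satisfies the standard conditions on rank and \emph{depth} by the Buchsbaum--Eisenbud acyclicity criterion \cite{BuchsbaumEisenbudWhatmakesacomplexexact}; since $\height I\ge\depth I$ for every ideal $I$, it also satisfies the standard conditions on rank and height. This is the only place the finite projective dimension of $M$ enters.

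Finally I would unwind the two sides of the conclusion of Theorem~\ref{theorem--key theorem}. Because $f$, $f_0$ and $f\circ f_0$ are affine morphisms, there are no higher direct images: $\mathbf{R}f_*O_Y$ is the $A$-module $R$ placed in cohomological degree $0$, and $\mathbf{R}f_*\mathbf{R}{f_0}_*O_X=(f\circ f_0)_*O_X$ is the $A$-module $S$ placed in degree $0$. Under these identifications the map induced by the pullback $f_0^*$ on $h^{-i}(F_\bullet\otimes -)$ is exactly the natural comparison map $\Tor_i^A(M,R)\to\Tor_i^A(M,S)$ associated to $R\to S$, so Theorem~\ref{theorem--key theorem} gives that it vanishes for every $i>0$. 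The only point that calls for a moment's care — and the nearest thing to an obstacle — is verifying that the abstract ``pullback'' arrow of the Key Theorem genuinely coincides with this natural map, which is immediate from the functoriality of $\otimes$ and of cohomology. Thus the corollary is essentially a restatement of Theorem~\ref{theorem--key theorem} in ring-theoretic terms.
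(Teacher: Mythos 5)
Your proof is correct and takes essentially the same approach as the paper: take a finite free resolution $F_\bullet$ of $M$, observe that acyclicity gives the standard conditions on rank and depth (hence on rank and height, since $\depth\le\height$), apply Theorem~\ref{theorem--key theorem} with $Y=\Spec R$ and $X=\Spec S$, and note that affineness kills all higher direct images so the conclusion unwinds to the stated vanishing of $\Tor$. The extra care you take in verifying the hypotheses (properness and surjectivity of $f$, the identification of the pullback with the natural comparison map) is left implicit in the paper but is exactly the right thing to check.
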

\begin{proof}
Since $M$ has a finite projective dimension, it has a finite free resolution $F_\bullet$. As $F_\bullet$ is acyclic, it satisfies the standard conditions for rank and depth and hence also the standard conditions for rank and height. Applying Theorem \ref{theorem--key theorem} to $F_\bullet$, $Y=\Spec R$, $X=\Spec S$ and noticing that there are no higher direct images because all the maps are affine, we find that $h^{-i}(F_\bullet\otimes R)\to h^{-i}(F_\bullet\otimes S)$ vanishes for every $i>0$. But $\Tor_i^A(M, R)=h^{-i}(F_\bullet\otimes R)$ and $\Tor_i^A(M, S)=h^{-i}(F_\bullet\otimes S)$, so the conclusion follows.
\end{proof}

Now we state and prove our main theorem.

\begin{theorem}
\label{theorem--main theorem}
Let $S$ be a local domain that is essentially of finite type over a field. The following are equivalent:
\begin{enumerate}
\item $S$ satisfies the vanishing conditions for maps of Tor.
\item $S$ is a derived splinter.
\item For every regular local ring $A$ with $S=A/P$ and every module-finite torsion-free extension $A\to B$ with $Q\in \Spec B$ lying over $P$, $P\to Q$ splits as $A$-modules.
\end{enumerate}
\end{theorem}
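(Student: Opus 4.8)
The plan is to deduce all three equivalences by stitching together results already established in the paper, arranged as $(1)\Leftrightarrow(3)$ together with $(2)\Rightarrow(1)\Rightarrow(2)$. The equivalence $(1)\Leftrightarrow(3)$ is nothing but the equal-characteristic assertion of Theorem \ref{theorem--(1)=(3) in main theorem}: there conditions (1), (2), (3) are shown to be equivalent in equal characteristic, and condition (2) of that theorem is precisely condition (3) of the present theorem (module-finite extensions of regular local rings always split in equal characteristic). So this half requires no new argument.

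For $(2)\Rightarrow(1)$ I would first invoke the reduction of Lemma \ref{lemma--reductions to check VCT}: to check that $S$ satisfies the vanishing conditions for maps of Tor it is enough to consider $A$ local, $R$ a module-finite \emph{domain} extension of $A$, $M$ finitely generated, and only the case $i=1$. Since $A$ is a regular local ring, such an $M$ automatically has finite projective dimension, so Corollary \ref{corollary--ring version of key theorem} — the ring-level specialization of the Key Theorem, which says exactly that a derived splinter $S$ forces $\Tor_i^A(M,R)\to\Tor_i^A(M,S)$ to vanish for $i>0$ in this situation — applies directly and finishes the implication. All the real content here sits inside the Key Theorem, which is already available at this point; the deduction itself is a one-line bookkeeping step.

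For $(1)\Rightarrow(2)$ I would split on the characteristic of the ground field. In equal characteristic $0$, Proposition \ref{proposition--VCT implies VCLC and pseudo-rational} together with Remark \ref{remark--VCT implies VCLC for nice rings} shows that $S$ is pseudo-rational; since $S$ is essentially of finite type over a field of characteristic $0$, pseudo-rationality coincides with having rational singularities, which by \cite{KovacsAcharacterizationofrationalsingularities} and \cite{BhattDerivedsplintersinpositivecharacteristic} coincides with being a derived splinter. In equal characteristic $p>0$, Corollary \ref{corollary--VCT implies splinter} shows that $S$ is a splinter, and by \cite{BhattDerivedsplintersinpositivecharacteristic} splinters coincide with derived splinters in characteristic $p>0$; hence $S$ is a derived splinter in either case.

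The proof is therefore a short synthesis, and the only genuine care needed — the ``main obstacle'' such as it is — is organizational: one must make sure that the standing hypotheses hold when invoking Proposition \ref{proposition--VCT implies VCLC and pseudo-rational} and Corollary \ref{corollary--VCT implies splinter} (that $S$, being essentially of finite type over a field, is a homomorphic image of a regular ring, and that the Noether-normalization substitute for Cohen's structure theorem recorded in Remark \ref{remark--VCT implies VCLC for nice rings} is what licenses dropping completeness), and that the two external inputs are applied in the correct characteristic: ``pseudo-rational $=$ derived splinter'' is the characteristic-$0$ statement, while ``splinter $=$ derived splinter'' is the characteristic-$p$ statement, and together they exhaust all equal-characteristic cases.
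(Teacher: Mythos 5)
Your proof is correct and follows essentially the same route as the paper: $(1)\Leftrightarrow(3)$ via Theorem \ref{theorem--(1)=(3) in main theorem}, $(2)\Rightarrow(1)$ via Lemma \ref{lemma--reductions to check VCT} and Corollary \ref{corollary--ring version of key theorem}, and $(1)\Rightarrow(2)$ by splitting on characteristic and invoking Remark \ref{remark--VCT implies VCLC for nice rings} (char $0$) and Corollary \ref{corollary--VCT implies splinter} (char $p$) together with the identification of derived splinters with rational singularities, respectively splinters.
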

\begin{proof}
We already know $(1)\Leftrightarrow(3)$ by Theorem \ref{theorem--(1)=(3) in main theorem}. Moreover, recall that derived splinters are the same as rational singularities (equivalently, pseudo-rational singularities) in characteristic $0$, and are equivalent to splinters in characteristic $p>0$. Hence $(1)\Rightarrow(2)$ follows from Remark \ref{remark--VCT implies VCLC for nice rings} and Corollary \ref{corollary--VCT implies splinter} in characteristic $0$ and characteristic $p>0$ respectively. Finally we prove $(2)\Rightarrow (1)$. Suppose we have $A\to R\to S$ with $A$ regular and $R$ module-finite and torsion-free over $A$. To check $\Tor_i^A(M, R)\to \Tor_i^A(M, S)$ vanishes, we can assume $A$ is local, $R$ is a domain and $M$ is a finitely generated $A$-module by Lemma \ref{lemma--reductions to check VCT}. Since $A$ is regular, $M$ has finite projective dimension. Hence the vanishing of $\Tor_i^A(M, R)\to \Tor_i^A(M, S)$ follows immediately from Corollary \ref{corollary--ring version of key theorem}.
\end{proof}

\begin{remark}
\label{remark--key theorem extehds original vanishing theorems}
\begin{enumerate}
\item  We point out that in Theorem \ref{theorem--main theorem}, $(2)\Rightarrow (1)$ in characteristic $p>0$ also follows directly from the fact that $R^+$ is a balanced big Cohen-Macaulay algebra: one can use the same argument as in Theorem 4.1 in \cite{HochsterHunekeApplicationsofbigCMalgebras} and simply notice that the map $S\to S^+$ is always pure when $S$ is a splinter in characteristic $p>0$.
\item  However, our method in characteristic $0$ is of great interest: it gives the first proof of Theorem \ref{theorem--vanishing theorem for maps of Tor} (even in the regular case) in characteristic $0$ without using reduction to characteristic $p>0$. In fact, our result in characteristic $0$ doesn't even seem to follow from reduction to characteristic $p>0$. It is well known from \cite{SmithFRatImpliesRat} and \cite{HaraRatImpliesFRat} that a local ring essentially of finite type over a field of characteristic $0$ has rational singularities if and only if its mod $p$ reductions, for all sufficiently large $p$, are $F$-rational. But $F$-rationality is known to be weaker than being a derived splinter in characteristic $p>0$ and hence $F$-rational rings do not satisfy the vanishing conditions for maps of Tor in general by Theorem \ref{theorem--main theorem}.
\item  Moreover, equal characteristic regular local rings satisfying the vanishing conditions for maps of Tor is a very special case of our Key Theorem \ref{theorem--key theorem}: the case that $A$ is regular with $F_\bullet$ a free resolution of a finitely generated $A$-module $M$, $Y\to\Spec A$ is finite surjective and $X$ is regular affine. So our Theorem \ref{theorem--key theorem} greatly extends Hochster-Huneke's Theorem \ref{theorem--vanishing theorem for maps of Tor}, and actually it also generalizes the main theorems of \cite{HochsterHunekePhantomhomology}.
\end{enumerate}
\end{remark}

\begin{remark}
\label{remark--VCT generalizes Boutot's theorem}
We point out that Boutot's theorem that direct summands of rational singularities are rational singularities \cite{BoutotRationalsingularitiesandquotientsbyreductivegroups} follows from our vanishing conditions for maps of Tor applied to $M=E_A$, the injective hull of $A$. Let $(R,\m)\to (S,\n)$ be a split map of local rings essentially of finite type over a field of characteristic $0$ and let $S$ have rational singularities. For every surjection $(B,\m_1)\twoheadrightarrow (R,\m)$ with $B$ equidimensional, we can find $(A,\m_0)\to (B,\m_1)$ module-finite with $(A,\m_0)$ regular by Noether normalization. Now we consider the map $A\to B\to R\to S$. Since $S$ has rational singularities, it satisfies the vanishing conditions for maps of Tor by Theorem \ref{theorem--main theorem}. Hence $\Tor_i^A(E_A, B)\to\Tor_i^A(E_A, R)\to\Tor_i^A(E_A, S)$ vanishes for $i\geq 1$. This implies $\Tor_i^A(E_A, B)\to\Tor_i^A(E_A, R)$ vanishes for $i\geq 1$ because $R\to S$ splits. Hence $R$ satisfies the vanishing conditions for maps of local cohomology (recall that $\Tor_i^A(E_A, B)=H_\m^{d-i}(B)$). Therefore by Lemma \ref{lemma--VCLC implies pseudo-rational}, $R$ has rational singularities.
\end{remark}

As a consequence of Theorem \ref{theorem--main theorem}, we obtain a new characterization of rational singularities:

\begin{corollary}
\label{corollary--main corollary}
Let $(S,\n)$ be a local domain essentially of finite type over a field of characteristic $0$. Then $S$ has rational singularities if and only if for every regular local ring $A$ with $S=A/P$, every module-finite torsion-free extension $A\to T$, and every $Q\in\Spec T$ lying over $P$, the map $P\to Q$ splits as a map of $A$-modules.
\end{corollary}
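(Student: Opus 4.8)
The plan is to read the corollary off directly from Theorem \ref{theorem--main theorem}. Recall (Section 2) that over a field of characteristic $0$ a ring has rational singularities precisely when it is a derived splinter; this is the result of Kov\'acs \cite{KovacsAcharacterizationofrationalsingularities}, with the general statement established in \cite{BhattDerivedsplintersinpositivecharacteristic} (Theorem 2.12). Thus, for our $S$ --- which is a local domain essentially of finite type over a field of characteristic $0$, hence satisfies all the standing hypotheses of the paper --- the assertion ``$S$ has rational singularities'' is literally condition $(2)$ of Theorem \ref{theorem--main theorem}.

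Next I would simply invoke the equivalence $(2)\Leftrightarrow(3)$ of Theorem \ref{theorem--main theorem}. Condition $(3)$ there states that for every regular local ring $A$ with $S=A/P$ and every module-finite torsion-free extension $A\to B$ with $Q\in\Spec B$ lying over $P$, the map $P\to Q$ splits as a map of $A$-modules. After renaming $B$ to $T$, this is word for word the condition in the statement of the corollary. Chaining the two equivalences yields: $S$ has rational singularities $\Longleftrightarrow$ $S$ is a derived splinter $\Longleftrightarrow$ the splitting property holds for all such $A\to T$ and $Q$, which is exactly the claim.

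So essentially no new work is required: the corollary is just the $(2)\Leftrightarrow(3)$ part of Theorem \ref{theorem--main theorem} translated through the characteristic-$0$ dictionary ``derived splinter $=$ rational singularities''. The one point worth a moment's attention is to confirm that the hypotheses needed to apply Theorem \ref{theorem--main theorem} --- and, through its proof, the Grauert--Riemenschneider / Koll\'ar-type vanishing theorems --- are in force; they are, since $S$ is assumed essentially of finite type over a field of characteristic $0$. I do not foresee any genuine obstacle here: all the substance lies in the Key Theorem \ref{theorem--key theorem} and in Theorem \ref{theorem--(1)=(3) in main theorem}, both already proved above, and the present corollary is a direct consequence.
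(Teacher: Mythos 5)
Your argument coincides with the paper's own proof: both deduce the corollary directly from the equivalence $(2)\Leftrightarrow(3)$ of Theorem \ref{theorem--main theorem} together with the identification of derived splinters with rational singularities in equal characteristic $0$. No gaps; this is exactly the intended derivation.
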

\begin{proof}
This follows immediately from  $(2)\Leftrightarrow(3)$ in Theorem \ref{theorem--main theorem}, and the result that derived splinters are exactly rational singularities in equal characteristic $0$.
\end{proof}

We next want to prove a theorem that characterizes the vanishing conditions for maps of local cohomology in equal characteristic. We first prove a lemma that is of independent interest. Recall that in characteristic $p>0$, $0^*_{H_\n^d(S)}$ (the tight closure of $0$) is the largest proper submodule of ${H_\n^d(S)}$ that is stable under the natural Frobenius action \cite{SmithFRatImpliesRat}.

\begin{lemma}
\label{lemma--characterization of tight closure of 0 in top local cohomology}
Let $(S,\n)$ be a local domain of equal characteristic $p>0$. Then we have:
\begin{equation}
\label{equation--comparison of the sum of image with tight closure}
\sum_{R}\im(H_\m^d(R)\to H_\n^d(S))=0^*_{H_\n^d(S)}
\end{equation}
where the sum is taken over all $(R,\m) \twoheadrightarrow (S,\n)$ such that $\dim R/P>\dim S=d$ for every minimal prime $P$ of $R$.
\end{lemma}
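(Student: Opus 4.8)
The plan is to prove the two inclusions separately. For the inclusion "$\supseteq$", note that by Lemma \ref{lemma--image contains the plus closure} we already have $\sum_R \im(H_\m^d(R)\to H_\n^d(S)) \supseteq 0^+_{H_\n^d(S)}$, where the sum ranges over exactly the same class of surjections $R\twoheadrightarrow S$. Now invoke Smith's theorem \cite{Smithtightclosureofparameterideals} that $0^+_{H_\n^d(S)} = 0^*_{H_\n^d(S)}$ in characteristic $p>0$ (this equality is recalled in Section 2). This immediately gives $\sum_R \im(H_\m^d(R)\to H_\n^d(S)) \supseteq 0^*_{H_\n^d(S)}$.

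For the reverse inclusion "$\subseteq$", the key observation is that the left-hand side is a submodule of $H_\n^d(S)$, so by the characterization of $0^*_{H_\n^d(S)}$ as the \emph{largest} proper submodule stable under the Frobenius action, it suffices to check two things: (a) that $\sum_R \im(H_\m^d(R)\to H_\n^d(S))$ is a proper submodule of $H_\n^d(S)$, and (b) that it is stable under the natural Frobenius action $F$ on $H_\n^d(S)$. For (a), properness should follow because each $R$ in the sum has dimension $>d$, so $R$ is \emph{not} Cohen-Macaulay along the relevant parameters in a way that forces the image of $H_\m^d(R)$ to miss the socle — concretely, for any single system of parameters $\underline{x}$ of $S$, lifting to $R$ (which has larger dimension) the corresponding elements are part of a system of parameters, and the induced map on the relevant Koszul/\v{C}ech cohomology shows the socle-generating class of $H_\n^d(S)$ (the one detecting non-containment, via $S/\underline{x}S \hookrightarrow$) is not hit; alternatively one argues that if the sum were all of $H_\n^d(S)$ then every ideal generated by a system of parameters would be tightly closed, forcing $S$ to be $F$-rational hence Cohen-Macaulay, contradicting that we can always find $R\twoheadrightarrow S$ with $\dim R > d$. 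For (b), Frobenius-stability: given a class $\xi = \im(\eta)$ for some $\eta \in H_\m^d(R)$, one wants $F(\xi)$ to again lie in the image of some $H_\m^{d'}(R')$ from an allowable surjection. The natural move is to use that $R$ itself has a Frobenius, compatible with the surjection $R\twoheadrightarrow S$, so $F(\xi)$ is the image of $F_R(\eta) \in H_\m^d(R)$ under the same map $H_\m^d(R)\to H_\n^d(S)$; one just needs the surjection $R\twoheadrightarrow S$ to still be in the allowable class, which it is.

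Actually the cleanest route for "$\subseteq$" may instead be direct: $0^*_{H_\n^d(S)}$ contains $0^+_{H_\n^d(S)}$, and more is true — Smith's equality says they coincide — but for the image-containment we really want to show each $\im(H_\m^d(R)\to H_\n^d(S))$ lands inside $0^*$. Here I would use the standard fact that if $R\twoheadrightarrow S$ with $\dim R > d = \dim S$ (and $R$ equidimensional, which we may arrange), then for any $\eta \in H_\m^d(R)$ and its image $\xi \in H_\n^d(S)$, a parameter test element argument shows $\xi \in 0^*_{H_\n^d(S)}$: indeed $H_\m^d(R)$ is "killed" by going up one dimension in the sense that the appropriate multiple by a parameter annihilates it after the surjection, which is exactly the tight closure condition. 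I expect the main obstacle to be making step (a)/(b) — or this alternative parameter-test-element argument — precise and clean, i.e. correctly identifying why $\dim R > d$ forces the image into the tight closure rather than filling up all of $H_\n^d(S)$. This is really the heart of the matter; once one knows $\sum_R \im \subseteq 0^*$, combining with the $\supseteq$ direction from Lemma \ref{lemma--image contains the plus closure} and Smith's $0^+ = 0^*$ closes the proof.
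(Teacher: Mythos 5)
Your overall strategy matches the paper's: prove $\supseteq$ via Lemma \ref{lemma--image contains the plus closure} together with Smith's equality $0^+_{H_\n^d(S)} = 0^*_{H_\n^d(S)}$, and prove $\subseteq$ by observing that each image $\im(H_\m^d(R)\to H_\n^d(S))$ is a Frobenius-stable submodule of $H_\n^d(S)$, hence contained in $0^*$ provided it is proper, since $0^*$ is the unique largest proper Frobenius-stable submodule. The $\supseteq$ half is correct and identical to the paper's. The Frobenius-stability of the image in the $\subseteq$ half is also correctly explained (Frobenius on $R$ is compatible with the surjection to $S$).

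The gap is that you never actually prove properness, i.e.\ that $H_\m^d(R)\to H_\n^d(S)$ is not surjective, and this is the one real piece of content in the lemma. Your first attempt (lifting parameters and asserting "the socle-generating class is not hit") is a heuristic, not an argument. Your alternative attempt is wrong on two counts: the sum being all of $H_\n^d(S)$ says nothing about tight closure of parameter ideals in $S$, and even if $S$ were $F$-rational and Cohen-Macaulay this would not contradict the existence of surjections $R\twoheadrightarrow S$ with $\dim R > d$ — such surjections always exist, e.g.\ $R = S[[t]]\twoheadrightarrow S$. Your "parameter test element" paragraph is also only a sketch of intent, as you acknowledge. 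What the paper does at this point: factor $R\twoheadrightarrow S$ through a domain $R'$ with $\dim R' = d+1$, write $S = R'/Q$ with $Q$ a height-one prime, and look at the long exact sequence
\[
H_\m^d(R') \longrightarrow H_\n^d(S) \longrightarrow H_\m^{d+1}(Q) \longrightarrow H_\m^{d+1}(R').
\]
Then it invokes Lemma 3.3 of \cite{MaAsufficientconditionforFpurity}, which says that because $\dim R' = d+1$ and $\height Q = 1$, the map $H_\m^{d+1}(Q) \to H_\m^{d+1}(R')$ is \emph{not} injective; by exactness this forces $H_\m^d(R') \to H_\n^d(S)$ to be non-surjective. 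Without this ingredient the $\subseteq$ inclusion has no proof.
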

\begin{proof}
Take a surjection $(R,\m)\to (S,\n)$, we first prove that the image of $H_\m^d(R)\to H_\n^d(S)$ is contained in $0^*_{H_\n^d(S)}$. Since $\dim R/P>d$ for every minimal prime $P$ of $R$, $R\to S$ obviously factors through $R\to R'\to S$ for some domain $R'$ with $\dim R'=d+1$. Hence the image of $H_\m^d(R)\to H_\n^d(S)$ is contained in the image of  $H_\m^d(R')\to H_\n^d(S)$, which is clearly a submodule of $H_\n^d(S)$ stable under the Frobenius action. Thus it is contained in $0^*_{H_\n^d(S)}$ as long as it is not equal to $H_\n^d(S)$. Therefore, it suffices to show that $H_\m^d(R')\to H_\n^d(S)$ is not surjective. Write $S=R'/Q$ for some height one prime ideal $Q$ of $R'$, we have the long exact sequence of local cohomology:
\begin{equation}
\label{equation--long exact sequence of non-injectivity of local cohomology}
\cdots \to H_\m^d(R')\to H_\n^d(S)\to H_\m^{d+1}(Q)\to H_\m^{d+1}(R').
\end{equation}
Since $R'$ has dimension $d+1$ and $Q$ is a height one prime,  $H_\m^{d+1}(Q)\to H_\m^{d+1}(R')$ is not injective by Lemma 3.3 in \cite{MaAsufficientconditionforFpurity}. Therefore $H_\m^d(R')\to H_\n^d(S)$ is not surjective by (\ref{equation--long exact sequence of non-injectivity of local cohomology}). Hence we have proved $$\sum_{R}\im(H_\m^d(R)\to H_\n^d(S))\subseteq 0^*_{H_\n^d(S)}.$$ On the other hand, Lemma \ref{lemma--image contains the plus closure} shows that that $$\sum_{R}\im(H_\m^d(R)\to H_\n^d(S))\supseteq 0^+_{H_\n^d(S)}=0^*_{H_\n^d(S)}$$ where the last equality follows from the main theorem of \cite{Smithtightclosureofparameterideals}. This finishes the proof.
\end{proof}

\begin{theorem}
\label{theorem--main theorem on VCLC}
Let $(S,\n)$ be a local domain that is essentially of finite type over a field. In characteristic $0$, $S$ satisfies the vanishing conditions for maps of local cohomology if and only if $S$ has rational singularities, while in characteristic $p>0$, $S$ satisfies the vanishing conditions for maps of local cohomology if and only if $S$ is $F$-rational.
\end{theorem}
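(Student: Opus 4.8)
The plan is to prove the two equivalences by splitting into the two implications and, within each, handling characteristic $0$ and characteristic $p>0$ separately, reducing everything to results already established.

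First I would dispatch the forward direction. In characteristic $0$ this is immediate: Lemma \ref{lemma--VCLC implies pseudo-rational} shows the vanishing conditions for maps of local cohomology force $S$ to be pseudo-rational, and for rings essentially of finite type over a field of characteristic $0$ pseudo-rationality is the same as rational singularities. In characteristic $p>0$ pseudo-rationality is too weak, so I would argue directly: the vanishing conditions force $S$ to be Cohen--Macaulay (by the Cohen--Macaulay clause built into Proposition/Definition \ref{definition--VCLC}) and force $0^+_{H_\n^d(S)}=0$ (the last assertion of Lemma \ref{lemma--image contains the plus closure}). Combining with Smith's theorem that $0^*_{H_\n^d(S)}=0^+_{H_\n^d(S)}$ \cite{Smithtightclosureofparameterideals} gives $0^*_{H_\n^d(S)}=0$, so $S$ is Cohen--Macaulay with vanishing tight closure of $0$ in top local cohomology, i.e. $F$-rational by the definition recalled in Section 2 \cite{SmithFRatImpliesRat}.

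For the converse in characteristic $0$ I would route through the Tor vanishing conditions: if $S$ has rational singularities then $S$ is a derived splinter \cite{KovacsAcharacterizationofrationalsingularities}, \cite{BhattDerivedsplintersinpositivecharacteristic}, hence satisfies the vanishing conditions for maps of Tor by Theorem \ref{theorem--main theorem}, and then satisfies the vanishing conditions for maps of local cohomology by Proposition \ref{proposition--VCT implies VCLC and pseudo-rational} together with Remark \ref{remark--VCT implies VCLC for nice rings} (which upgrades the complete case to the essentially-of-finite-type case by using Noether normalization in place of Cohen's structure theorem). For the converse in characteristic $p>0$ I would invoke Lemma \ref{lemma--characterization of tight closure of 0 in top local cohomology}: if $S$ is $F$-rational then $S$ is Cohen--Macaulay and $0^*_{H_\n^d(S)}=0$, and the lemma identifies the sum over all admissible surjections $(R,\m)\twoheadrightarrow(S,\n)$ (those with $\dim R/P>d$ for every minimal prime $P$) of the images of $H_\m^d(R)\to H_\n^d(S)$ with $0^*_{H_\n^d(S)}=0$; hence each such $H_\m^d(R)\to H_\n^d(S)$ is the zero map, which is exactly condition (4) of Proposition/Definition \ref{definition--VCLC}.

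The theorem itself presents no serious obstacle, since essentially all the work has been absorbed into the earlier results; the point requiring care is the characteristic-$p$ bridge between the cohomological vanishing condition and $F$-rationality, which rests on the two deep inputs of Smith (the coincidence of $0^*$ and $0^+$ in top local cohomology \cite{Smithtightclosureofparameterideals}, \cite{SmithFRatImpliesRat}) and on the non-injectivity statement feeding Lemma \ref{lemma--characterization of tight closure of 0 in top local cohomology}. One should also double-check that in the characteristic-$0$ converse the passage from the complete case of Proposition \ref{proposition--VCT implies VCLC and pseudo-rational} to rings essentially of finite type over a field is legitimate exactly as indicated in Remark \ref{remark--VCT implies VCLC for nice rings}, and that the relevant surjecting rings $R$ are automatically of equal characteristic $0$ so that the characteristic-preservation hypothesis in the definition is satisfied.
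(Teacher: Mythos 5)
Your proof is correct and follows essentially the same route as the paper: the forward direction in characteristic $0$ via Lemma \ref{lemma--VCLC implies pseudo-rational}, the characteristic-$p$ equivalence via Lemma \ref{lemma--characterization of tight closure of 0 in top local cohomology} (your forward argument unwinds the half of that lemma that rests on Lemma \ref{lemma--image contains the plus closure} and Smith's theorem, which is fine), and the characteristic-$0$ converse by routing through derived splinters, Theorem \ref{theorem--main theorem}, and Proposition \ref{proposition--VCT implies VCLC and pseudo-rational} together with Remark \ref{remark--VCT implies VCLC for nice rings}. You have merely spelled out what the paper's terse proof leaves implicit.
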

\begin{proof}
The characteristic $0$ assertion follows (implicitly) from the proof of Theorem \ref{theorem--main theorem}, as $S$ satisfies the vanishing conditions for maps of local cohomology implies $S$ has rational singularities by Lemma \ref{lemma--VCLC implies pseudo-rational}. It remains to prove the characteristic $p>0$ statement. But this follows immediately from Lemma \ref{lemma--characterization of tight closure of 0 in top local cohomology} and Definition \ref{definition--VCLC}, since when $S$ is Cohen-Macaulay, $S$ is $F$-rational if and only if $0^*_{H_\n^d(S)}=0$ \cite{HochsterHunekeFRegularityTestElementsBaseChange}, \cite{SmithFRatImpliesRat}.
\end{proof}

Finally, it is quite natural to ask whether our main theorem, Theorem \ref{theorem--main theorem}, holds in mixed characteristic. By Theorem \ref{theorem--(1)=(3) in main theorem}, $(3)\Rightarrow(1)$ always holds and the main obstruction of $(1)\Rightarrow(3)$ is the direct summand conjecture in mixed characteristic. Below we give a partial answer of $(1)\Rightarrow(2)$. We believe this result and its proof is of independent interest (for example, see Remark \ref{remark--VCT implies DDSC} and \ref{remark--new proof of splinter=derived splinter in char p}).

\begin{theorem}
\label{theorem--VCT implies derived splinters for Gorenstein rings}
If $(S,\n)$ is a quasi-Gorenstein complete local domain (of mixed characteristic) that satisfies the vanishing condition for maps of Tor, then $S$ is a derived splinter.
\end{theorem}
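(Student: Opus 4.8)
The plan is to use the Gorenstein hypothesis to collapse the derived splitting of $O_X\to\mathbf{R}f_*O_Y$ into a single injectivity of top local cohomology, and then to establish that injectivity by the Sancho de Salas argument of Lemma~\ref{lemma--VCLC implies pseudo-rational}, applied not to a blow-up but to an arbitrary projective $\Proj$ over $S$. First I would record the structure of $S$: by Proposition~\ref{proposition--VCT implies VCLC and pseudo-rational}, $S$ satisfies the vanishing conditions for maps of local cohomology and is pseudo-rational, hence Cohen--Macaulay; being moreover quasi-Gorenstein, $S$ is then Gorenstein, so its normalized dualizing complex is $\omega_S^\bullet\cong S[d]$ with $d=\dim S$, and $H^d_\n(S)$ is the injective hull $E$ of the residue field.

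Next I would reduce the derived splinter property to a cohomological statement. Let $f\colon Y\to X=\Spec S$ be proper surjective. Replacing $Y$ by an irreducible component dominating $X$ (the pullback $O_X\to\mathbf{R}f_*O_Y$ factors through the corresponding restriction, so this loses nothing) and then invoking Chow's lemma, I may assume $Y$ is integral and $f$ is projective. Since $\mathbf{R}f_*O_Y\in D^b_{\Coh}(X)$ and $\omega_X^\bullet\cong S[d]$, Grothendieck duality (via $\mathbf{R}\sheafhom_X(\mathbf{R}f_*O_Y,\omega_X^\bullet)\cong\mathbf{R}f_*\omega_Y^\bullet$) carries $O_X\to\mathbf{R}f_*O_Y$ to the trace $\tau\colon\mathbf{R}f_*\omega_Y^\bullet\to S[d]$, and the two are split or not split together. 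As $S[d]$ is a shift of the free module $S$, a section of $\tau$ is precisely an element of $h^{-d}(\mathbf{R}f_*\omega_Y^\bullet)=\mathbb{H}^{-d}(Y,\omega_Y^\bullet)$ mapping to $1\in S$; hence $\tau$ splits iff the induced $S$-linear map $t_f\colon\mathbb{H}^{-d}(Y,\omega_Y^\bullet)\to S$ is surjective. Both modules being finitely generated over the complete local ring $S$, Matlis duality converts this into injectivity of $t_f^\vee$, and local duality together with proper base change for local cohomology identify $t_f^\vee$ with the canonical map $H^d_\n(S)\to H^d_E(Y,O_Y)$, where $E=f^{-1}(\n)$ and $H^d_E(Y,O_Y)\cong\mathbb{H}^{-d}(Y,\omega_Y^\bullet)^\vee$. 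So it is enough to prove: for every proper surjective $f$ as above, $H^d_\n(S)\to H^d_E(Y,O_Y)$ is injective.

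This injectivity I would prove by imitating Lemma~\ref{lemma--VCLC implies pseudo-rational}. Embed the integral projective $Y$ in some $\mathbb{P}^N_S$ and let $R$ be its homogeneous coordinate ring over $S$. Because $f$ is surjective, $S\hookrightarrow\Gamma(Y,O_Y)$, so $R$ is a finitely generated graded domain, generated in degree one, with $R_0=S$ and $\Proj R=Y$, and the degree-zero projection $R\twoheadrightarrow R_0=S$ is a ring surjection with nonzero kernel $R_+$. Localizing at $\mathfrak M=\n R+R_+$ gives a surjection of local domains $(R_{\mathfrak M},\mathfrak M)\twoheadrightarrow(S,\n)$ with $R_{\mathfrak M}$ equidimensional and $\dim R_{\mathfrak M}\ge d+1>d$. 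By the vanishing conditions for maps of local cohomology (Definition~\ref{definition--VCLC}) applied to this surjection, the map $H^d_{\mathfrak M}(R)\to H^d_\n(S)$ is zero. Now feeding $Y=\Proj R\to\Spec S$ into the Sancho de Salas exact triangle (the cohomology of~(\ref{equation--exact triangle}); see also \cite{SanchodeSalasBlowingupmorphismswithCohenMacaulayassociatedgradedrings} and the proof of Lemma~\ref{lemma--VCLC implies pseudo-rational}) yields an exact sequence $[H^d_{\mathfrak M}(R)]_0\to H^d_\n(S)\to H^d_E(Y,O_Y)$, whose first arrow is the degree-zero part of $H^d_{\mathfrak M}(R)\to H^d_\n(S)$ and hence vanishes; therefore $H^d_\n(S)\to H^d_E(Y,O_Y)$ is injective, completing the argument.

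The step I expect to be the crux is the reduction of the second paragraph: it is precisely the quasi-Gorenstein hypothesis which allows a splitting of complexes in $D^b_{\Coh}(X)$ to be detected by surjectivity of one map of finitely generated modules, equivalently by one injectivity of top local cohomology; without it, one is left with pseudo-rationality, which says nothing about non-birational covers. The other point that requires care, handled by the homogeneous coordinate ring, is that a proper surjective $Y\to X$ need be neither birational nor of dimension $d$: realizing it as $\Proj$ of a graded $S$-algebra of dimension $>d$ is exactly what lets the vanishing conditions for maps of local cohomology enter and makes the Sancho de Salas computation of Lemma~\ref{lemma--VCLC implies pseudo-rational} go through in general.
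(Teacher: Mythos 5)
Your proof is correct, and it takes a genuinely different and cleaner route than the paper's. The paper first reduces to $Y\to\Spec S$ projective and \emph{generically finite} (by a dimension-reduction trick), Stein-factorizes through $T=\pi_*O_X$, realizes $X$ as $\Proj$ of a Rees algebra over $T$ (so the Sancho de Salas sequence lands in $H^d_\n(T)$, not $H^d_\n(S)$), and then needs the quasi-Gorenstein hypothesis in Claim~\ref{second claim}, where the injectivity of $H^d_\n(S)\cong E_S$ as an $S$-module is used to manufacture a retraction $\phi\colon T\to S$ compatibly killing the Sancho de Salas image; the derived splitting is then assembled by hand via the diagram (\ref{equation--dualized exact triangle}). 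You instead put the quasi-Gorenstein hypothesis up front: since $\omega_S^\bullet\cong S[d]$, Grothendieck and local duality collapse the derived splitting of $O_X\to\mathbf{R}f_*O_Y$ to the single injectivity $H^d_\n(S)\hookrightarrow H^d_E(Y,O_Y)$, and by taking $R$ to be the homogeneous coordinate ring of $Y\subset\mathbb P^N_S$ with $R_0=S$ (a graded domain of dimension $>d$ generated in degree one), the Sancho de Salas sequence maps $[H^d_{\n+R_{>0}}(R)]_0$ directly into $H^d_\n(S)$, and the vanishing conditions for maps of local cohomology kill this map immediately, exactly as in the proof of Lemma~\ref{lemma--VCLC implies pseudo-rational}. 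This bypasses the generically finite reduction, the Stein factorization, and both Claim~\ref{first claim} and Claim~\ref{second claim}. What the paper's longer route buys is that it works directly in the language of Section~5 without invoking the duality reformulation of the derived splinter condition, and it produces the splitting of $S\to\mathbf R\pi_*O_X$ by an explicit composition through $T$; but the two approaches use the quasi-Gorenstein hypothesis for the same underlying reason (that $\omega_S^\bullet$ is free in a single degree), and your version is the more economical one. One small point to make explicit when writing this up: $R$ is the image of $S[x_0,\dots,x_N]$ in $\bigoplus_n\Gamma(Y,O_Y(n))$, so it is a domain with $R_0=S$ (using $S\hookrightarrow\Gamma(Y,O_Y)$ from surjectivity), and $R_{\mathfrak M}$ is a local domain of dimension $\dim R=\dim Y+1\ge d+1$ by catenarity of the excellent domain $R$, so Definition~\ref{definition--VCLC}(2) applies.
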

\begin{proof}
We first note that the conditions imply $S$ is Cohen-Macaulay (and thus Gorenstein) by Proposition \ref{proposition--VCT implies VCLC and pseudo-rational} because $S$ is complete and satisfies the vanishing conditions for maps of Tor.

Let $\pi$: $X\to\Spec S$ be a proper surjective map, we want to show $S\to \mathbf{R}\pi_*O_X$ splits in the derived category of $S$-modules. By Chow's Lemma we may assume that $X$ is projective. I claim we may reduce to the case that $X\to \Spec S$ is a projective and generically finite map between integral schemes.\footnote{This should be well known. We provide the argument because we cannot find a good reference for this in mixed characteristic.} We first find an irreducible component $W$ of $X$ and an affine open $U=\Spec B$ of $W$ that dominates $\Spec S$. It follows that $B$ is a domain containing $S$, finitely generated as an $S$-algebra. Let $L$ be the fraction field of $S$,  we have $\dim (L\otimes B)=\dim B-\dim S$ by Theorem 13.8 in \cite{Eisenbud95}. Hence if $\dim B-\dim S\geq 1$, then $\dim (L\otimes B)\geq 1$. This means there exist nonzero primes in $B$ that contract to $0$ in $S$. Pick such a prime $Q$, we have $S\hookrightarrow B/Q$ is injective. Thus $V=\Spec{B/Q}$ still dominates $\Spec S$. Taking the closure of $V$ in $W$, call it $X'$, we have $X'\to \Spec S$ is projective and surjective with $\dim X'<\dim X$. We could repeat this process until we get $\dim X=\dim S$, i.e., $X\to \Spec S$ is projective and generically finite. Next we consider the Stein factorization:
\[X\to \Spec(\pi_*O_X)\to\Spec S.\]
Let $T=\pi_*O_X$. We know that $T$ is a module-finite domain extension of $S$. In particular, since $S$ is complete, this implies $T$ is a local ring and $\n T$ is primary to the maximal ideal of $T$. The map $X\to\Spec T$ is projective and birational, thus it is just the blow-up of some ideal $J\subseteq T$. Let $R=T[Jt]=T\oplus Jt\oplus J^2t^2\oplus\cdots$ and we have $X=\Proj R$.

Pick $f_1,\dots,f_n\in Jt=[R]_1$ such that $U=\{U_i=\Spec[R_{f_i}]_0\}$ is an affine open cover of $X$. We have an exact sequence of chain complexes (see page 150 of \cite{LipmanCMgradedalgebras}):
\[0\to \check{C}^\bullet(U, O_X)[-1]\to [C^\bullet(f_1,\dots,f_n, R)]_0\to T\to 0.\]
Since $\check{C}^\bullet(U, O_X)\cong\mathbf{R}\pi_*O_X$, the above sequence gives us (after rotating) an exact triangle:
\[[\mathbf{R}\Gamma_{R_{>0}}R]_0=[C^\bullet(f_1,\dots,f_n, R)]_0\to T\to \mathbf{R}\pi_*O_X\xrightarrow{+1}\]
Applying $\mathbf{R}\Gamma_{\n}$, we get:
\begin{equation}
\label{equation--exact triangle}
[\mathbf{R}\Gamma_{\n+R_{>0}}R]_0 \to  \mathbf{R}\Gamma_{\n}T  \to \mathbf{R}\Gamma_{\n}\mathbf{R}\pi_*O_X \xrightarrow{+1}.
\end{equation}

Let $d=\dim S=\dim T$ and $d+1=\dim R$. Next I prove two claims:
\begin{claim}
\label{first claim}
$[H_{\n+R_{>0}}^{d+1}(R)]_0=0$, thus $[\mathbf{R}\Gamma_{\n+R_{>0}}R]_0$ lives in degree $[0, 1, \dots, d]$.
\end{claim}
\begin{proof}
This is well known, because the $a$-invariant of the Rees ring is always $-1$ (for example, see 2.4.2 and 2.5.2 of \cite{HyrySmithOnnonvanishingconjectureKawamataandCore}). For the sake of completeness we point out that this also follows from (\ref{equation--exact triangle}). By local duality, the dual of $h^d(\mathbf{R}\Gamma_\n T)\to h^d(\mathbf{R}\Gamma_\n\mathbf{R}\pi_*O_X)$ is the natural inclusion $\pi_*\omega_X\hookrightarrow \omega_T$ (since $X\to \Spec T$ is birational). Hence $$[H_{\n+R_{>0}}^{d+1}(R)]_0=h^{d+1}([\mathbf{R}\Gamma_{\n+R_{>0}}R]_0)=0.$$
\end{proof}

\begin{claim}
\label{second claim}
There exists an $S$-linear surjection $\phi$: $T\twoheadrightarrow S$ such that the composite: $$[H_{\n+R_{>0}}^d(R)]_0\to H_\n^d(T)\xrightarrow{\phi} H_\n^d(S)$$ is the zero map (the first map is induced by the natural surjection $R\twoheadrightarrow T$).
\end{claim}
\begin{proof}
Let $R'=S\oplus Jt\oplus J^2t^2\oplus\cdots$ be the subring of $R$ (they only differ at the degree 0 spot). We note that since $J$ is a finitely generated $S$-module, $R'$ is a Noetherian graded domain over $S$. We have the following commutative diagram:
\[
 \xymatrix{
    0\ar[r] & R_{>0} \ar[r] & R \ar[r] & T \ar[r] & 0\\
    0\ar[r] & R'_{>0} \ar[u]^\cong \ar[r] & R'\ar[u] \ar[r] & S \ar[u]\ar[r] & 0
}
\]
Viewing everything as modules or algebras over $R'$, the above diagram induces a commutative diagram of local cohomology:
\[
 \xymatrix{
    [H_{\n+R_{>0}}^d(R)]_0 \ar[r]^-f & H_{\n}^d(T) \ar[r] & [H_{\n+R_{>0}}^{d+1}(R_{>0})]_0 \ar[r] & [H_{\n+R_{>0}}^{d+1}(R)]_0=0\\
    [H_{\n+R_{>0}}^d(R')]_0 \ar[u] \ar[r]^-0 & H_{\n}^d(S)\ar[u] \ar@{^{(}->}[r] & [H_{\n+R_{>0}}^{d+1}(R'_{>0})]_0 \ar[u]^\cong & {}
}
\]
The rightmost $0$ on the first line comes from Claim \ref{first claim} and the first map on the second line is $0$ because $S$ is complete and satisfies the vanishing conditions for maps of Tor, hence in particular it satisfies the vanishing conditions of local cohomology by Proposition \ref{proposition--VCT implies VCLC and pseudo-rational}. Chasing the diagram it follows immediately that $H_{\n}^d(S)\hookrightarrow H_{\n}^d(T)/\im(f)$. Since $S$ is quasi-Gorenstein, $H_\n^d(S)\cong E_S$ is an injective $S$-module. So there is a map $g$: $H_{\n}^d(T)/\im(f)\to H_{\n}^d(S)$ such that the composite: $$H_{\n}^d(S)\to H_\n^d(T)\to H_{\n}^d(T)/\im(f)\xrightarrow{g} H_\n^d(S)$$ is the identity. In particular, there is an splitting $H_{\n}^d(T)\xrightarrow{\phi} H_{\n}^d(S)$ of $H_{\n}^d(S)\hookrightarrow H_\n^d(T)$ such that the composite $[H_{\n+R_{>0}}^d(R)]_0\xrightarrow{f} H_\n^d(T)\xrightarrow{\phi} H_\n^d(S)$ is the zero map. However, it follows from the following commutative diagram:
\[
 \xymatrix{
    \Hom_S(H_\n^d(T), H_\n^d(S)) \ar[r]\ar[d]^\cong & \Hom_S(H_\n^d(S), H_\n^d(S))\ar[d]^\cong\\
    \Hom_S(T, S) \ar[r] & \Hom_S(S, S)
}
\]
that every splitting $H_{\n}^d(T)\xrightarrow{\phi} H_{\n}^d(S)$ comes from some surjection $T\xrightarrow{\phi} S$.
\end{proof}

Now we return to the proof of Theorem \ref{theorem--VCT implies derived splinters for Gorenstein rings}. I claim that the composite map: $$[\mathbf{R}\Gamma_{\n+R_{>0}}R]_0\to \mathbf{R}\Gamma_\n T\to H_\n^d(T)[-d]\xrightarrow{\phi}H_\n^d(S)[-d]\cong\mathbf{R}\Gamma_\n S$$ is the zero map \emph{in the derived category}: it induces zero on the $d$-th cohomology by Claim \ref{second claim}, but by Claim \ref{first claim}, $[\mathbf{R}\Gamma_{\n+R_{>0}}R]_0$ lives in degree $[0,1,...,d]$ while $H_\n^d(S)[-d]$ lives only in degree $d$, so the map is zero in the derived category. The last isomorphism follows because $S$ is Cohen-Macaulay.

Let $\phi$ be the surjection in Claim \ref{second claim}. There exits $t\in T$ such that $\phi(t)=1\in S$, in particular the composite $S\xrightarrow{\cdot t} T\xrightarrow{\phi} S$ is the identity. From the above discussion and (\ref{equation--exact triangle}), we have a natural diagram in the derived category of $S$-modules:

\[
 \xymatrix{
      [\mathbf{R}\Gamma_{\n+R_{>0}}R]_0  \ar@{.>}[rd]^0 \ar[r] &  \mathbf{R}\Gamma_{\n}T  \ar@/_/[d]_{\phi} \ar[r]  & \mathbf{R}\Gamma_{\n}\mathbf{R}\pi_*O_X \ar[r]^-{+1}& {}\\
      {}&  \mathbf{R}\Gamma_{\n}S \ar@/_/[u]_{\cdot t} & {}
}
\]
Taking Matlis dual and applying local duality, we get:
\begin{equation}
\label{equation--dualized exact triangle}
 \xymatrix{
      \mathbf{R}\pi_*\omega_X^\bullet  \ar[r] &  \omega_T^\bullet  \ar@/_/[d]_{\cdot t^\vee} \ar[r]  & [\mathbf{R}\Gamma_{\n+R_{>0}}R]_0^\vee \ar[r]^-{+1}& {}\\
      {}&  \omega_S^\bullet \ar@/_/[u]_{\phi^\vee}\ar@{.>}[ru]^0 & {}
}
\end{equation}
From (\ref{equation--dualized exact triangle}) it follows that $\phi^\vee$ factors through a map $\omega_S^\bullet\to \mathbf{R}\pi_*\omega_X^\bullet$ such that the composite: $$\omega_S^\bullet\to \mathbf{R}\pi_*\omega_X^\bullet\to \omega_T^\bullet\xrightarrow{\cdot t^\vee}\omega_S^\bullet$$ is the identity. Applying $\mathbf{R}\Hom_S(-, \omega_S^\bullet)$, we obtain: $$S\xrightarrow{\cdot t} T=\pi_*O_X\to \mathbf{R}\pi_*O_X\to S$$ such that the composite is the identity. But this implies $S\to \mathbf{R}\pi_*O_X\xrightarrow{\cdot t}\mathbf{R}\pi_*O_X\to S$ is the identity (the second map is induced by $O_X\xrightarrow{\cdot t} O_X$ viewing $t$ as a section of $X\to \Spec S$). Hence $S\to \mathbf{R}\pi_*O_X$ splits in the derived category of $S$-modules. Therefore $S$ is a derived splinter, as desired.
\end{proof}

At the moment we don't see how to drop the quasi-Gorenstein hypothesis on $S$ in Theorem \ref{theorem--VCT implies derived splinters for Gorenstein rings}, the subtle point seems to be Claim \ref{second claim}. However, the above result and its proof already have some interesting consequences.

\begin{remark}
\label{remark--VCT implies DDSC}
Since regular local rings are certainly quasi-Gorenstein, Theorem \ref{theorem--VCT implies derived splinters for Gorenstein rings} immediately implies that Hochster-Huneke's vanishing conjecture for maps of Tor (or equivalently, the strong direct summand conjecture \cite{Ranganathanthesis}) implies the derived direct summand conjecture of Bhatt \cite{BhattDerivedsplintersinpositivecharacteristic}.
\end{remark}

\begin{remark}
\label{remark--new proof of splinter=derived splinter in char p}
The argument used in Theorem \ref{theorem--VCT implies derived splinters for Gorenstein rings} gives a new proof that splinters and derived splinters are the same in characteristic $p>0$ for all local rings that are homomorphic image of Gorenstein local rings. First of all it is well known that splinters are Cohen-Macaulay in characteristic $p>0$ (we don't need completeness \cite{HunekeLyubeznikAbsoluteintegralclosureinpositivecharacteristic}, \cite{HochsterHunekeInfiniteintegralextensionsandbigCMalgebras}). Now the only place in the argument that we use the vanishing conditions for maps of Tor and the quasi-Gorenstein hypothesis seriously is in the proof of Claim \ref{second claim}. But this claim is clear in characteristic $p>0$ and we give a short argument as follows: by Theorem 2.1 of \cite{HunekeLyubeznikAbsoluteintegralclosureinpositivecharacteristic} we know that there exists a module-finite extension $B$ of $R$ such that the induced map $H_{\n+R_{>0}}^d(R)\to H_{\n+R_{>0}}^d(B)$ is zero. Since $B\otimes_RT$ is a module-finite extension of $T$ and hence a module-finite extension of $S$, the map $S\to B\otimes_RT$ splits as $S$-modules. Let $$\phi: T\to B\otimes_RT\xrightarrow{g} S$$ be the composite map for some splitting $g$: $B\otimes_RT\to S$. We have the following commutative diagram:
\[
 \xymatrix{
    H_{\n+R_{>0}}^d(B) \ar[r] & H_{\n}^d(B\otimes_RT) \ar[rd]^g & {}\\
    [H_{\n+R_{>0}}^d(R)]_0 \ar[u] \ar[r] & H_{\n}^d(T)\ar[u] \ar@{.>}[r]^\phi & H_{\n}^d(S)
}
\]
Since the left vertical map is the zero map by our choice of $B$, chasing through the diagram it is clear that the composite $[H_{\n+R_{>0}}^d(R)]_0\to H_\n^d(T)\xrightarrow{\phi} H_\n^d(S)$ is the zero map. Hence Claim \ref{second claim} holds in characteristic $p>0$ as long as $S$ is a splinter (without any quasi-Gorenstein or complete hypothesis).
\end{remark}

\section*{Acknowledgement}
It is a pleasure to thank Mel Hochster for several enjoyable discussions on the vanishing conjecture for maps of Tor and other homological conjectures. I want to thank Karl Schwede for reading a preliminary version of the paper and for his valuable comments. I would also like to thank Bhargav Bhatt and Anurag Singh for some helpful discussions. Finally I want to thank the referee, whose comments and suggestions lead to improvement of the paper.

\bibliographystyle{skalpha}
\bibliography{CommonBib}
\end{document}